\theoremstyle{plain}
\newtheorem{lem}{Lemma}[section]
\newtheorem{thm}[lem]{Theorem}
\newtheorem{theorem}[lem]{Main Theorem}
\newtheorem{proposition}[lem]{Proposition}
\newtheorem{cor}[lem]{Corollary}
\theoremstyle{definition}
\newtheorem{Def}[lem]{Definition}
\numberwithin{equation}{section} \thispagestyle{empty} 
\begin{document}

\title[Decomposition of Augmented Cubes]{Decomposition of Augmented Cubes into Regular Connected Pancyclic Subgraphs }
\author{S. A. Kandekar, Y. M. Borse and B. N. Waphare }
\address{\rm Department of Mathematics, Savitribai Phule Pune University, Pune 411007, M.S., INDIA.}
\email{smitakandekar54@gmail.com; ymborse11@gmail.com; waphare@yahoo.com}

\maketitle
\baselineskip 20 truept
\begin{abstract}
In this paper, we consider the problem of decomposing the augmented cube $AQ_n$ into two spanning, regular, connected and pancyclic subgraphs. We prove that for  $ n \geq 4$ and  $ 2n  - 1 = n_1 + n_2 $ with $ n_1, n_2 \geq 2,$ the augmented cube $ AQ_n$ can be decomposed into two spanning subgraphs $ H_1$ and $ H_2$ such that each $ H_i$ is $n_i$-regular and $n_i$-connected. Moreover, $H_i$ is $4$-pancyclic if $ n_i \geq 3.$ 
\end{abstract}
\vskip.2cm
\noindent
{\bf Keywords:} spanning subgraphs, $r$-pancyclic, $n$-connected, hypercube, augmented cube    
\vskip.2cm
\noindent
{\bf Mathematics Subject Classification (2000): } 05C40, 05C70, 68R10

	\section{\textbf{Introduction}}
	
	Interconnection networks play an important role in communication systems and parallel computing. Such a network is usually represented by a graph where vertices stand for its processors and edges for links between the processors. Network topology is a crucial factor for interconnection networks as it determines the performance of the networks. Many interconnection network topologies have been proposed in literature such as mesh, torus, hypercube and hypercube like structures. The $n$-dimensional hypercube $Q_n$ is a popular interconnection network topology. It is an $n$-regular, $n$-connected, vertex-transitive graph with $2^n$ vertices and has diameter $n.$ 
	
	In 2002, Chaudam and Sunitha \cite{cs} introduced a variant of the hypercube $Q_n$ called augmented cube $AQ_n.$  The graph $AQ_n$ is $(2n - 1)$-regular, $(2n - 1)$-connected, pancyclic and vertex-transitive on $2^n$ vertices.  However, the diameter of $AQ_n$ is  $\lceil n/2 \rceil$ which is almost half the diameter of $Q_n.$ Hence there is less delay in data transmission in the augmented cube network than the hypercube network. Many results have been obtained in the literature to prove that the augmented cube is a good candidate for computer network topology design; see \cite{cs, ma, sx, ss, ww}. 
	
One of the central issue in evaluating a network is to study the embedding problem. It is said that a graph $H$ can be embedded into a graph $G$ if it is isomorphic to a subgraph of $G$ and if so, while modeling a network with graph, we can apply existing algorithms for graph $H$ to the graph $G$. Cycle networks are suitable for designing simple algorithms with low communication cost. Since some parallel applications, such as those in image and signal processing, are originally designed on a cycle architecture, it is important to have effective cycle embedding in a network. A graph $G$ is {\it $r$-pancyclic} if it contains cycles of every length from $r$ to $|V(G)|.$ A 3-pancyclic graph is  {\it pancyclic.} A graph $G$ on even number of vertices is {\it bipancyclic} if $G$ is a cycle or it contains cycles of every even length from 4 to $|V(G)|.$  A lot of research has been done regarding  pancyclicity of augmented cubes; see \cite{ch, fu, hs, ml, sx, wm, ww}. 

A {\it decomposition} of a graph $G$ is a list of its subgraphs  $ H_1$, $H_2$, \dots, $H_k$ such that every edge of $G$ belongs to  $H_i$ for exactly one $i.$   
The decompositions of  hypercubes into  Hamiltonian cycles, into smaller cycles, into paths and into trees are studied in \cite{al, ba, ss, mo, wa}. The existence of a spanning, $k$-regular, $k$-connected and bipancyclic subgraph of $Q_n,$ for every $k$ with $3 \leq k \leq n,$ is proved in  \cite{sm}.  Bass and Sudborough \cite{ba} pointed out that  the decomposition of $Q_n$ into regular, spanning, isomorphic subgraphs has potential applications in construction of adaptive routing algorithms and in the area of fault tolerant computing. They  obtained a decomposition of $Q_{n},$ for even $n,$ into two spanning, $(n/2)$-regular, $2$-connected, isomorphic subgraphs of diameter $n + 2.$ Borse and Kandekar \cite{bk} proved the existence of the decomposition of $Q_n$ into two regular subgraphs whose degrees are based on the given 2-partition of $n$ and further, they are rich with respect to connectivity and cycle embedding. 

 \begin{thm} [\cite{bk}] \label{1}
 	For  $ n = n_1 + n_2 $ with $ n_1, n_2 \geq 2,$  the hypercube $Q_n$ can be decomposed into two spanning subgraphs $ H_1$ and $ H_2$ such that $ H_i$ is $n_i$-regular, $n_i$-connected and bipancyclic. 
 \end{thm}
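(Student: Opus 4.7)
The plan is to proceed by induction on $n$, exploiting the recursive decomposition of $Q_n$ as two disjoint copies of $Q_{n-1}$ joined by a perfect matching along the last coordinate. The base case is $n_1 = n_2 = 2$, so $n = 4$; here the classical decomposition of $Q_4$ into two edge-disjoint Hamiltonian cycles suffices, since each such cycle is $2$-regular, $2$-connected and bipancyclic (a cycle on an even number of vertices qualifies by definition).

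For the inductive step, assume without loss of generality that $n_2 \geq 3$, and apply the inductive hypothesis to $Q_{n-1}$ with the partition $(n_1,\, n_2 - 1)$ to obtain subgraphs $H_1^{\ast}$ ($n_1$-regular) and $H_2^{\ast}$ ($(n_2-1)$-regular). Let $Q^{(0)}$ and $Q^{(1)}$ denote the two copies of $Q_{n-1}$ inside $Q_n$, and let $M$ be the perfect matching joining them. Place the inductive decomposition on each side and tentatively assign $M$ to the second part, producing $H_2 := H_2^{\ast(0)} \cup H_2^{\ast(1)} \cup M$ (already $n_2$-regular) and $\widetilde H_1 := H_1^{\ast(0)} \cup H_1^{\ast(1)}$ ($n_1$-regular, but disconnected into two copies of $H_1^{\ast}$).

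The heart of the proof is an edge-swap that repairs the connectivity of $H_1$ without disturbing degrees. For a suitably chosen family of $4$-cycles of the form $u_0 u_1 v_1 v_0$ with $u_0 v_0, u_1 v_1 \in H_1^{\ast}$ (so $u_0 u_1, v_0 v_1 \in M$), remove the two ``horizontal'' edges $u_0 v_0$ and $u_1 v_1$ from $H_1$ and insert them into $H_2$, simultaneously moving the two ``vertical'' matching edges $u_0 u_1, v_0 v_1$ out of $H_2$ and into $H_1$. Every such swap preserves the degree of each of the four participating vertices in both parts, and hence $n_1$- and $n_2$-regularity globally. By carrying out a sequence of such swaps whose vertical edges, when projected onto $Q^{(0)}$, form the edge set of a spanning connected subgraph, the updated $H_1$ becomes connected across the two copies.

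It then remains to verify $n_i$-connectivity and bipancyclicity of the resulting $H_1, H_2$. Bipancyclicity is the easier part: cycles of every even length from $4$ up to $2^n$ are produced by concatenating a cycle of appropriate even length in $H_i^{\ast(0)}$ with one in $H_i^{\ast(1)}$ through a pair of matching edges, using the bipancyclicity guaranteed by the induction hypothesis (and the new $4$-cycles created by the swaps). The principal obstacle is the $n_i$-connectivity after the swap: one must choose the $4$-cycles that are flipped so that no vertex cut of size less than $n_i$ can be produced by localized damage near the swapped edges. Handling this delicate point, most naturally by ensuring the vertical ``bridge'' edges are spread out along a Hamiltonian path or a suitably chosen spanning structure of $Q^{(0)}$, is the technical core of the argument in \cite{bk}.
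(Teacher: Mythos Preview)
This theorem is not proved in the present paper: it is quoted from \cite{bk} as background, and the paper's own contribution is the analogous statement (Theorem~\ref{2}) for augmented cubes. There is therefore no proof here to compare your attempt against. Note, incidentally, that the paper's inductive step for $AQ_n$ (Proposition~6.1) is structurally easier than anything for $Q_n$ could be, precisely because $AQ_n$ splits into two copies of $AQ_{n-1}$ joined by \emph{two} perfect matchings $E_n^h$ and $E_n^c$, one of which can be assigned to each part; in $Q_n$ there is only one such matching, and that asymmetry is exactly the difficulty your swap procedure is trying to address.

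As for the sketch itself: the base case and the inductive set-up are fine, but the swap step has a real gap that you yourself flag without resolving. Each swap removes an edge $u_0v_0$ from $H_1^{\ast(0)}$ (and its twin from $H_1^{\ast(1)}$); if you perform enough swaps for the bridge edges to reach every component, you will have deleted a sizeable collection of edges from each $H_1^{\ast(i)}$, and nothing you have said controls the damage to $n_1$-connectivity---or even connectivity---of those pieces. The sentence ``vertical edges, when projected onto $Q^{(0)}$, form the edge set of a spanning connected subgraph'' is also garbled: matching edges project to \emph{vertices}, not edges, so it is unclear what object you mean. You correctly identify connectivity after the swap as the principal obstacle, but you then defer it to \cite{bk} rather than supply the mechanism; as written this is an outline with the hard step postponed, not a proof.
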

The above result has been generalized for the decomposition of $Q_n$ based on partitions of $n$ into $k$ parts. It has been also extended to the class of the Cartesian product of even cycles; see \cite{bs, bss}.

 
 In this paper, we extend Theorem \ref{1} to  the class of augmented cubes. The following is the main theorem of the paper.	
	\begin{theorem}\label{2}
		Let  $ n \geq 4$ and  $ 2n  - 1 = n_1 + n_2 $ with $ n_1, n_2 \geq 2.$ Then the augmented cube $ AQ_n$ can be decomposed into two spanning  subgraphs $ H_1$ and $ H_2$ such that $ H_i$ is $n_i$-regular and $n_i$-connected. Moreover, $H_i$ is $4$-pancyclic if $ n_i \geq 3.$
	\end{theorem}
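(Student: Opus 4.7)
The plan is to adapt the inductive strategy used for Theorem~\ref{1} to the augmented cube, exploiting the standard recursive decomposition
\[
AQ_n \;=\; AQ_{n-1}^{0} \cup AQ_{n-1}^{1} \cup M_h \cup M_c,
\]
where $AQ_{n-1}^{s}$ is the subgraph of $AQ_n$ induced by the vertices whose first coordinate equals $s \in \{0,1\}$, and $M_h, M_c$ are the two perfect matchings between the copies formed by the dimension-one hypercube and dimension-one complementary edges respectively. Since every vertex of $AQ_{n-1}^{s}$ has exactly one $M_h$-neighbour and one $M_c$-neighbour in $AQ_{n-1}^{1-s}$, the transition $AQ_{n-1} \to AQ_n$ raises every degree by $2$, which suggests the natural partition reduction $(n_1,n_2) \mapsto (n_1-1, n_2-1)$ whenever it is admissible.

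I would take $n = 4$ as the base case: since $2\cdot 4 - 1 = 7$, up to symmetry there are only the partitions $(2,5)$ and $(3,4)$, and I plan to handle them by giving explicit edge-labelled decompositions of $AQ_4$ and verifying regularity, $n_i$-connectivity, and $4$-pancyclicity (wherever $n_i \geq 3$) by direct checking. For the inductive step in the generic subcase $n_1, n_2 \geq 3$ with $n \geq 5$, the induction hypothesis applied separately to each copy $AQ_{n-1}^{s}$ with the reduced partition $(n_1 - 1) + (n_2 - 1) = 2(n-1) - 1$ (both parts at least $2$) produces decompositions $(H_1^{s}, H_2^{s})$ with the required properties. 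I then set
\[
H_1 := H_1^{0} \cup H_1^{1} \cup M_h \quad\text{and}\quad H_2 := H_2^{0} \cup H_2^{1} \cup M_c,
\]
which are both $n_i$-regular by construction. The $n_i$-connectedness follows from the standard fact that two $(n_i-1)$-connected spanning subgraphs joined by a perfect matching cannot be separated by fewer than $n_i$ vertices, and $4$-pancyclicity comes from combining short cycles (length $\leq 2^{n-1}$) obtained from either copy by the IH with longer cycles of the form ``path of length $\ell_0$ in $H_i^{0}$ $+$ two matching edges $+$ path of length $\ell_1$ in $H_i^{1}$'', realising every length $\ell_0 + \ell_1 + 2$ up to $2^{n}$ by varying the two path lengths.

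The delicate subcase is $n_1 = 2$ (hence $n_2 = 2n-3$), where the naive inductive reduction would require an inadmissible $1$-regular piece in $AQ_{n-1}$. For this case I would apply the IH to $AQ_{n-1}$ with the partition $(2, 2n-5)$ (valid for $n \geq 5$; $n = 4$ is in the base case) to obtain compatibly chosen Hamiltonian cycles $C_0, C_1$ of the two copies, and stitch them into a single Hamiltonian cycle
\[
H_1 = (C_0 - uv) \cup (C_1 - u'v') \cup \{uu',\, vv'\}
\]
of $AQ_n$, using two $M_h$-edges corresponding to an edge $uv$ of $C_0$ under the natural isomorphism $0w \leftrightarrow 1w$. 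The complementary subgraph $H_2 := AQ_n - E(H_1)$ is automatically $(2n-3)$-regular by an edge count. The principal technical obstacle is then to show that $H_2$ is $(2n-3)$-connected and $4$-pancyclic; I would do this by exploiting the untouched matching $M_c$ together with the reinserted edges $uv, u'v'$ as alternative paths, combined with the strong connectivity and rich cycle structure of $H_2^{s}$ inherited from the IH. The subsidiary difficulties I expect to confront are securing the mirror-compatible choice of $C_0, C_1$ in the boundary case and overcoming parity obstructions when producing odd-length cycles in the generic case where the $H_i$ pieces have degree exactly $3$, both of which may require slightly strengthening the inductive statement or perturbing the constructed cycles.
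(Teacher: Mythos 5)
Your overall skeleton coincides with the paper's: split $AQ_n$ into two copies of $AQ_{n-1}$ plus the two perfect matchings $E_n^h$ and $E_n^c$, reduce the partition to $(n_1-1,n_2-1)$, assign $E_n^h$ to one part and $E_n^c$ to the other, use the ``matching between two $(k)$-connected graphs gives $(k+1)$-connectivity'' lemma, and handle $n_1=2$ by stitching two Hamiltonian cycles of the copies into one Hamiltonian cycle of $AQ_n$ whose complement must then be analysed. All of that is exactly what the paper does. The genuine gap is in how you propose to get $4$-pancyclicity in the inductive step. Realising every length $\ell_0+\ell_1+2$ by a ``path in $H_i^0$ $+$ two matching edges $+$ path in $H_i^1$'' requires, for each target length, a path of \emph{prescribed} length between a \emph{prescribed} pair of matched endpoints in each copy; that is a panconnectivity property, which is strictly stronger than the pancyclicity your induction hypothesis supplies, and you have no mechanism to produce it. The parity problem you flag for odd cycle lengths when the pieces are $3$-regular is likewise real and is not resolved by your outline. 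The paper's way around both issues is to strengthen the induction with a structural invariant: each part carries a spanning \emph{ladder-like} subgraph (two Hamiltonian cycles of the two copies joined by a perfect matching plus two extra ``diagonal'' edges $u_1v_4$, $u_4v_1$), and one of the two parts carries a version with a distinguished ``special $4$-cycle'' of complementary edges. Lemma~\ref{l2} shows such a subgraph is $4$-pancyclic outright (the diagonals supply the odd lengths), and Lemma~\ref{l3} shows the invariant propagates from $AQ_{n-1}$ to $AQ_n$ using four edges of $E_n^h$ for one part and four edges of $E_n^c$ for the other (which is why the special $4$-cycle must be tracked). Pancyclicity of $H_i$ then falls out of the invariant rather than out of any gluing of cycles.

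A second, consequent omission: your base case is only $n=4$. Because a ladder-like graph has four vertices of degree $4$, a $2$- or $3$-regular part cannot contain one as a spanning subgraph, so the invariant that powers the general induction can only be maintained when $n_1,n_2\geq 4$. This forces the paper to give three separate bespoke inductive constructions --- for $n_1=2$ (Section 3), $n_1=3$ (Section 4, where the $3$-regular part is taken to be a ladder-like graph minus the two rungs $u_1v_1,u_4v_4$), and $n_1=4$ (Section 5, where a nontrivial connectivity argument using the bound of at most four common neighbours in $AQ_n$ is needed for the complement) --- before the clean reduction $(n_1,n_2)\mapsto(n_1-1,n_2-1)$ can take over for $n_1\geq 5$. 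Your plan, which launches the generic induction already at $n_1=3$ and treats only $n=4$ explicitly, would leave the $(4,n_2)$ cases without the structure needed to continue, and would also leave the $(2n-3)$-connectivity of the complement of the Hamiltonian cycle unproved except by the case analysis you defer.
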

	In Section 2, we  provide necessary definitions and preliminary results. The special case $ n_1 = 2$ of the main theorem is proved in third section. In section 4 and 5, we consider the cases $ n_1 = 3$ and $n_1 = 4,$ respectively. We complete the proof of the main theorem using these special cases in the last section. 
 \section{ \textbf{Preliminaries}}
 The $n$-dimensional augmented cube is denoted by $AQ_n, n\geq 1.$ It can be defined recursively as follows.\\
 $AQ_1$ is a complete graph $K_2$ with vertex set $\{0, 1\}.$ For $ n \geq 2,~ AQ_n$ is obtained from two copies of the augmented cube $AQ_{n-1},$ denoted by $AQ^0_{n-1}$ and $AQ^1_{n-1},$ and adding $2^n$ edges between them as follows.\\
 Let $V(AQ^0_{n-1}) = \{ 0x_1x_2...x_{n-1} \colon x_i = 0~~ \rm{or}~~ 1\}$ and $V(AQ^1_{n-1}) = \{1y_1y_2...y_{n-1} \colon y_i = 0~~ \rm{or}~~ 1\}.$
 A vertex $x = 0x_1x_2...x_{n-1}$ of $AQ^0_{n-1}$ is joined to a vertex $y = 1y_1y_2...y_{n-1}$ of $AQ^1_{n-1}$ if and only if either 
 \begin{enumerate}
 	\item $x_i = y_i$ for $1 \leq i \leq n-1,$ in this case the edge is called hypercube edge and we set $y = x^h$ or
 	
 	\item $x_i = \overline{y_i}$ for $1 \leq i \leq n-1,$ in this case the edge is called complementary edge and we set $y = x^c.$
 \end{enumerate}

  Let $E_n^h$ and $E_n^c$ be the set of hypercube edges and complementary edges, respectively used to construct $AQ_n$ from $AQ_{n-1}.$ Then $E_n^h$ and $E_n^c$ are the perfect matchings of $AQ_n$ and further,  $AQ_n = AQ^0_{n-1} \cup AQ^1_{n-1} \cup E_n^h \cup E_n^c.$ The Augmented cubes of dimensions 1, 2 and 3 are shown in Fig. $1.$

\begin{tikzpicture}

\draw [fill=black] (0,0) circle  (.08)  node [left]  at (0,0) {$0$};
\draw [fill=black] (0,2) circle  (.08)  node [left]  at (0,2) {$1$};
\draw [style= thin] (0,0)--(0,2);


\draw [fill=black] (3,0) circle  (.08)  node [left]  at (3,0) {$00$};
\draw [fill=black] (3,2) circle  (.08)  node [left]  at (3,2) {$01$};
\draw [fill=black] (5,0) circle  (.08)  node [right]  at (5,0) {$10$};
\draw [fill=black] (5,2) circle  (.08)  node [right]  at (5,2) {$11$};
\draw [style= thin] (3,0)--(3,2);
\draw [style= thin] (3,0)--(5,0);
\draw [style= thin] (5,0)--(5,2);
\draw [style= thin] (5,0)--(3,2);
\draw [style= thin] (3,0)--(5,2);
\draw [style= thin] (5,2)--(3,2);


\draw [fill=black] (8,0) circle  (.08)  node [left]  at (8,0) {$000$};
\draw [fill=black] (8,2) circle  (.08)  node [left]  at (8,2) {$001$};
\draw [fill=black] (10,0) circle  (.08)  node [right]  at (10.1,0.05) {$010$};
\draw [fill=black] (10,2) circle  (.08)  node [right]  at (10.1,1.95) {$011$};
\draw [style= thin] (8,0)--(8,2);
\draw [style= thin] (8,0)--(10,0);
\draw [style= thin] (10,0)--(10,2);
\draw [style= thin] (10,0)--(8,2);
\draw [style= thin] (8,0)--(10,2);
\draw [style= thin] (10,2)--(8,2);

\draw [fill=black] (12,0) circle  (.08)  node [left]  at (11.9,0.05) {$100$};
\draw [fill=black] (12,2) circle  (.08)  node [left]  at (11.9,1.95) {$101$};
\draw [fill=black] (14,0) circle  (.08)  node [right]  at (14,0) {$110$};
\draw [fill=black] (14,2) circle  (.08)  node [right]  at (14,2) {$111$};
\draw [style= thin] (12,0)--(12,2);
\draw [style= thin] (12,0)--(14,0);
\draw [style= thin] (14,0)--(14,2);
\draw [style= thin] (14,0)--(12,2);
\draw [style= thin] (12,0)--(14,2);
\draw [style= thin] (14,2)--(12,2);

\draw [style= thin] (8,0)--(14,2);
\draw [style= thin] (14,0)--(8,2);
\draw [style= thin] (10,2)--(12,0);
\draw [style= thin] (10,0)--(12,2);

\draw [style= thin] (8,0)..controls(10, -0.5)..(12,0);
\draw [style= thin] (10,0)..controls(12,-0.5 )..(14,0);
\draw [style= thin] (8,2)..controls(10,2.5 )..(12,2);
\draw [style= thin] (10,2)..controls(12,2.5 )..(14,2);
\end{tikzpicture}

\hspace{0.05in} {$AQ_1$}  \hspace{1.1in } {$AQ_2$} \hspace{2.2in } {$AQ_3$}

\hspace{1.2in }  {Figure 1: Augmented cube of dimension 1, 2, 3.}\\

   In $AQ_2, \, E_2^h = \{ <01, 11>, <00, 10>\}$ and $E_2^c = \{ <01, 10>, <00, 11>\}$. From the definition, it is clear that $AQ_n$ is a $(2n-1)$-regular graph on $2^n$ vertices. It is also known that $AQ_n$ is $(2n-1)$-connected and vertex-transitive \cite{cs}.

A {\it ladder} on $2m$ with $ m\geq 2$ is a graph consisting of two vertex-disjoint paths, say  $P_1 = <u_1, u_2, \dots, u_m>$ and $ P_2 = <v_1, v_2, \dots, v_m>,$ that  are joined by edges $u_iv_i$  for $ i = 1, 2, \dots m; $ see Figure 2(a). The following lemma follows easily.

\begin{lem}[\cite{bs}] \label{l1} 
	A ladder is bipancyclic.
\end{lem}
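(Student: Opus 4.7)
The plan is to exhibit, for each even integer $\ell$ with $4 \le \ell \le 2m$, an explicit cycle of length $\ell$ in the ladder. First I would observe that the ladder is bipartite: put $u_i$ into class $A$ when $i$ is odd and into class $B$ when $i$ is even, and put $v_i$ into the opposite class. Then every rail edge $u_iu_{i+1}$ and $v_iv_{i+1}$ joins vertices whose index parities differ, and every rung $u_iv_i$ joins $u_i$ and $v_i$, which are in opposite classes by construction. So this is a proper $2$-colouring, every cycle in the ladder has even length, and it suffices to realize each even length in $[4, 2m]$.

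The core construction is the following: for each $k$ with $2 \le k \le m$, take the closed walk
\[
C_k \;=\; u_1,\; u_2,\; \ldots,\; u_k,\; v_k,\; v_{k-1},\; \ldots,\; v_1,\; u_1.
\]
This uses the $k-1$ edges of $P_1$ from $u_1$ to $u_k$, the rung $u_kv_k$, the $k-1$ edges of $P_2$ from $v_k$ back to $v_1$, and the rung $v_1u_1$, for a total of $2k$ edges. All listed vertices are pairwise distinct (except for the closing repetition of $u_1$), since $P_1$ and $P_2$ are vertex-disjoint paths and the indices $1,\ldots,k$ appear only once on each rail. Hence $C_k$ is a cycle of length exactly $2k$. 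Letting $k$ range over $\{2,3,\ldots,m\}$ produces cycles of every even length from $4$ to $2m = |V(\text{ladder})|$, which is precisely the condition for bipancyclicity.

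I expect no genuine obstacle; the argument is a direct construction and the only routine verification is that the listed vertex sequence for $C_k$ has no internal repetitions, which is immediate from the structure of the two rails. The step most worth writing carefully is simply the edge count for $C_k$, to be sure the two rungs are not double-counted.
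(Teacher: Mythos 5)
Your proof is correct, and it is exactly the standard argument the paper has in mind: the paper states this lemma without proof (citing \cite{bs} and remarking that it "follows easily"), and the intended easy argument is precisely your family of cycles $C_k = <u_1,\dots,u_k,v_k,\dots,v_1,u_1>$ of length $2k$ for $2 \le k \le m$. One small remark: the opening bipartiteness verification is harmless but superfluous here, since the paper's definition of bipancyclic only asks for the existence of cycles of every even length from $4$ to $|V(G)|$, not for the absence of odd cycles.
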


  We now define a {\it ladder-like graph} which is shown to be pancyclic and  is used to prove pancyclicity of certain  subgraphs of $AQ_n$ in subsequent results.   


%

\begin{Def}\label{d1}	
	Let $ m\geq 6$ be an integer. A subgraph $G$ of $AQ_n$ on $2m$ vertices said to be {\it a ladder-like graph} if it contains  \begin{enumerate}
		\item[(i)]two cycles, say $  Z_1= <u_1, u_2, \dots, u_m, u_1>$ and $ Z_2 = <v_1, v_2, \dots, v_m, v_1>.$
		\item[(ii)] edges $u_iv_i$  for $ i = 1, 2, \dots m$ and 
		\item[(iii)] two edges $ u_1v_4$ and $ u_4 v_1$ (see Figure 2(b)).
	\end{enumerate}  
	We say that the vertices $u_1, u_2, \dots, u_m$ are on one side of $G$ and $v_1, v_2, \dots, v_m$ are on the other side of $G.$ Denote by $B_8,$ the subgraph of $G$ induced by the set of vertices $\{ u_1, u_2, u_3, u_4, v_1, v_2, v_3, v_4\}.$ 
	
	In addition, if $G$ contains a 4-cycle $C = <u_t, u_{t+1}, v_{t+1}, v_t, u_t>, $ where $ t \geq 5$ and $ u_{t+1} = u_t^c,\,\, v_{t+1} = v_t^c,$  then the subgraph $G$ is said to be {\it a ladder-like graph with a special 4-cycle.}
\end{Def}

\begin{center}
	\begin{tikzpicture}[scale=0.9]
	\draw [fill=black] (2,0) circle  (.08)  node [left]  at (2,0) {$u_m$};
	\draw [fill=black] (2,1) circle  (.08)  node [left]  at (2,1) {$u_{5}$};
	\draw [fill=black] (2,2) circle  (.08)  node [left]  at (2,2) {$u_{4}$};
	\draw [fill=black] (2,3) circle  (.08)  node [left]  at (2,3) {$u_3$};
	\draw [fill=black] (2,4) circle  (.08)  node [left]  at (2,4) {$u_2$};
	\draw [fill=black] (2,5) circle  (.08)  node [left]  at (2,5) {$u_1$};
	
	\draw [fill=black] (5,0) circle  (.08)  node [right]  at (5,0) {$v_{m}$};
	\draw [fill=black] (5,1) circle  (.08)  node [right]  at (5,1) {$v_{5}$};
	\draw [fill=black] (5,2) circle  (.08)  node [right]  at (5,2) {$v_{4}$};
	\draw [fill=black] (5,3) circle  (.08)  node [right]  at (5,3) {$v_3$};
	\draw [fill=black] (5,4) circle  (.08)  node [right]  at (5,4) {$v_2$};
	\draw [fill=black] (5,5) circle  (.08)  node [right]  at (5,5) {$v_1$};


	\draw [style= thick, dotted] (2,0)--(2,1);
	\draw [style= thick, dotted] (5,0)--(5,1);
	\draw [style= thin] (2,1)--(2,2);
	\draw [style= thin] (5,1)--(5,2);
	\draw [style= thin] (2,2)--(2,5);
	\draw [style= thin] (5,2)--(5,5);
	\draw [style= thin] (2,0)--(5,0);
	\draw [style= thin] (2,1)--(5,1);
	\draw [style= thin] (2,2)--(5,2);
	\draw [style= thin] (2,3)--(5,3);
	\draw [style= thin] (2,4)--(5,4);
	\draw [style= thin] (2,5)--(5,5);
	
	
	\draw [fill=black] (8,0) circle  (.08)  node [left]  at (8,0) {$u_m$};
	\draw [fill=black] (8,1) circle  (.08)  node [left]  at (8,1) {$u_{5}$};
	\draw [fill=black] (8,2) circle  (.08)  node [left]  at (8,2) {$u_4$};
	\draw [fill=black] (8,3) circle  (.08)  node [left]  at (8,3) {$u_3$};
	\draw [fill=black] (8,4) circle  (.08)  node [left]  at (8,4) {$u_2$};
	\draw [fill=black] (8,5) circle  (.08)  node [left]  at (8,5) {$u_1$};
	\draw  node [above]  at (6.9,2.2) {$Z_1$};
	
	\draw [fill=black] (11,0) circle  (.08)  node [right]  at (11,0) {$v_m$};
	\draw [fill=black] (11,1) circle  (.08)  node [right]  at (11,1) {$v_{5}$};
	\draw [fill=black] (11,2) circle  (.08)  node [right]  at (11,2) {$v_4$};
	\draw [fill=black] (11,3) circle  (.08)  node [right]  at (11,3) {$v_3$};
	\draw [fill=black] (11,4) circle  (.08)  node [right]  at (11,4) {$v_2$};
	\draw [fill=black] (11,5) circle  (.08)  node [right]  at (11,5) {$v_1$};
	\draw  node [above]  at (12.1,2.2) {$Z_2$};
	
	\draw [style= thick, dotted] (8,0)--(8,1);
	\draw [style= thick, dotted] (11,0)--(11,1);
	\draw [style= thin] (8,1)--(8,2);
	\draw [style= thin] (11,1)--(11,2);
	\draw [style= thin] (8,2)--(8,5);
	\draw [style= thin] (11,2)--(11,5);
	\draw [style= thin] (8,0)--(11,0);
	\draw [style= thin] (8,1)--(11,1);
	\draw [style= thin] (8,2)--(11,2);
	\draw [style= thin] (8,3)--(11,3);
	\draw [style= thin] (8,4)--(11,4);
	\draw [style= thin] (8,5)--(11,5);
	
	\draw [style= thin] (8,0)..controls(7, 2)..(8,5);
	\draw [style= thin] (11,0)..controls(12, 2)..(11,5);
	
	\draw [style= thin] (8,5)--(11,2);
	\draw [style= thin] (8,2)--(11,5);
	\end{tikzpicture}
	\end{center}

\hspace{1.2in} {(a). Ladder} \hspace{.9in} {(b). Ladder-like graph}

\hspace{1.2in} {Figure 2: Ladder and Ladder-like graph}

A spanning ladder-like subgraph $L$ of the augmented cube $AQ_4$ is shown in Figure 4(b) by dark lines. $Z_1 = <0010,\,\, 1101,\,\, 1110,\,\, 0110,\,\, 0100,\,\, 1011,\,\, 1000,\,\, 0000,\,\, 0010>$ and\\$Z_2 = <0101,\,\, 1010,\,\, 1001,\,\, 0001,\,\, 0011,\,\, 1100,\,\, 1111,\,\, 0111,\,\, 0101>$ are the cycles of $L.$ The subgraph $B_8$ of $L$ is induced by $\{0010,\,\, 1101,\,\, 1110,\,\, 0110,\,\, 0001,\,\, 1001,\,\, 1010,\,\, 0101\}$ and a special 4-cycle is $C = <0100,\,\, 1011,\,\, 1100,\,\, 0011,\,\, 0100>$

\begin{lem}\label{l2}
	Let $L$ be a ladder-like graph on $2m$ vertices with $ m\geq 6$ as defined above. Then the subgraph  $ L-\{u_1v_1, u_4v_4\}$ of $L$ is  3-regular, 3-connected and 4-pancyclic.
\end{lem}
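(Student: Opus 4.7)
The plan is to set $H := L - \{u_1v_1, u_4v_4\}$ and verify the three properties in turn. The $3$-regularity is immediate: each $u_i$ or $v_i$ with $i \notin \{1,4\}$ has degree $3$ in $L$ (two cycle-neighbours and one rung) and is unaffected by the two deletions, while each of $u_1, u_4, v_1, v_4$ has degree $4$ in $L$ (two cycle-neighbours, one rung, one diagonal) and loses exactly one incident edge, dropping to degree $3$ in $H$.

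For $3$-connectivity, $\kappa(H) \le 3$ follows from regularity; for the reverse inequality I would show that $H - \{x, y\}$ stays connected for every pair of vertices $x, y$. The key structural fact is that the subgraph of $H$ induced on $\{u_5, \dots, u_m, v_5, \dots, v_m\}$, together with the rungs $u_iv_i$ for $5 \le i \le m$ and the relevant arcs of $Z_1, Z_2$, is a $2$-connected ladder; it is joined to the head piece $B_8$ by the four independent edges $u_4u_5$, $v_4v_5$, $u_mu_1$, $v_mv_1$, and $B_8 - \{u_1v_1, u_4v_4\}$ is itself $2$-connected thanks to the diagonals $u_1v_4$ and $u_4v_1$. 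A case analysis on how $\{x, y\}$ is distributed between head and ladder then settles the matter.

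For $4$-pancyclicity I exhibit a cycle of every length $\ell$ with $4 \le \ell \le 2m$. By Lemma~\ref{l1}, the induced ladder on $\{u_5, \dots, u_m, v_5, \dots, v_m\}$ is bipancyclic and contributes every even length $4, 6, \dots, 2m-8$. The cycle $u_4, u_3, v_3, v_4, v_5, u_5, u_4$ supplies length $6$ and the cycle $u_1, u_2, u_3, u_4, v_1, v_2, v_3, v_4, u_1$ supplies length $8$ for the small values of $m$ where these lie outside the ladder's range. For each $0 \le j \le m - 5$, the template
\[
u_1,\, u_2,\, u_3,\, v_3,\, v_2,\, v_1,\, u_4,\, u_5,\, \dots,\, u_{m-j},\, v_{m-j},\, v_{m-j-1},\, \dots,\, v_4,\, u_1
\]
is a cycle of length $2(m-j)$ in $H$, yielding the remaining even lengths $10, 12, \dots, 2m$. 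For odd lengths, $u_1, u_2, u_3, v_3, v_4, u_1$ is a $5$-cycle; for each $5 \le k \le m$, the cycle
\[
u_4,\, u_3,\, u_2,\, u_1,\, v_4,\, v_5,\, \dots,\, v_k,\, u_k,\, u_{k-1},\, \dots,\, u_5,\, u_4
\]
has length $2k-3$, so the family produces every odd length from $7$ to $2m-3$; and finally
\[
u_1,\, u_m,\, u_{m-1},\, \dots,\, u_4,\, v_1,\, v_m,\, v_{m-1},\, \dots,\, v_4,\, v_3,\, u_3,\, u_2,\, u_1
\]
is a cycle of length $2m-1$ in $H$ that omits exactly the vertex $v_2$. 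Checking that each listed edge belongs to $H$ is routine and uses only $Z_1$, $Z_2$, the rungs with $i \in \{2,3,5,\dots,m\}$, and the two diagonals.

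The main obstacle is the $3$-connectivity case analysis: it is mechanical but requires care in distinguishing where the deleted pair lies with respect to $B_8$, the four attachment edges, and the two diagonals. Within the pancyclicity argument, the $(2m-1)$-cycle is the only genuinely non-obvious construction, since no single-vertex deletion from the natural Hamilton cycle (the $j=0$ template above) leaves the two new endpoints adjacent in $H$; a different Hamilton cycle of $H - \{v_2\}$ must therefore be built from scratch.
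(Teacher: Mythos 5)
Your proposal is correct, and for the regularity and pancyclicity parts it follows essentially the same strategy as the paper: Lemma~\ref{l1} applied to the ladder on columns $5,\dots,m$ gives the short even cycles, explicit cycles through $B_8$ give the remaining even lengths, and the diagonals $u_1v_4$, $u_4v_1$ convert even templates into odd cycles. Your parametrized family $u_1,u_2,u_3,v_3,v_2,v_1,u_4,\dots,u_{m-j},v_{m-j},\dots,v_4,u_1$ and your $(2m-1)$-cycle omitting $v_2$ differ in detail from the paper's four explicit long even cycles and its $(2m-1)$-cycle omitting $u_2$, but every edge you use does lie in $Z_1\cup Z_2$, the rungs with $i\in\{2,3,5,\dots,m\}$, or the two diagonals, and the lengths you produce cover $\{4,\dots,2m\}$ without gaps. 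Where you genuinely diverge is the $3$-connectivity: the paper argues that no two edge deletions disconnect the graph, so the edge-connectivity is $3$, and then uses the classical fact that vertex- and edge-connectivity coincide for cubic graphs. You instead propose a direct vertex-deletion case analysis built on the decomposition into the $2$-connected ladder, the $2$-connected head $B_8-\{u_1v_1,u_4v_4\}$ (an $8$-cycle $u_1u_2u_3u_4v_1v_2v_3v_4u_1$ plus the chords $u_2v_2$, $u_3v_3$), and the four independent attachment edges $u_4u_5$, $v_4v_5$, $u_mu_1$, $v_mv_1$. This route is completable and more self-contained, but noticeably more laborious; if you keep it, you must also record the two facts your sketch silently relies on, namely that every component of the ladder minus two vertices still contains a column-$5$ or column-$m$ vertex, and that every component of the head minus two vertices still contains one of $u_1,u_4,v_1,v_4$ (isolating $\{u_2,u_3,v_2,v_3\}$ or any subset of it from the corners requires deleting at least three vertices). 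Both checks are easy, but without them the phrase ``a case analysis then settles the matter'' is doing real work.
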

\begin{proof} Let $G = L-\{u_1v_1~,~ u_4v_4\}.$   Obviously,  $G$ is 3-regular.  It is easy to see  that deletion of any two edges from $G$ does not disconnect $G.$ Hence the edge-connectivity of $G$ is three.  Therefore, the vertex connectivity of $G$ is also three as  $G$ is a regular graph. Therefore $G$ is 3-connected. 
	
	We now prove that $G$ is pancyclic by constructing cycles of every length from 4 to $|V(G)|.$  By Lemma \ref{l1},  the ladder in $G$ formed by the paths  $<u_5, u_6, \dots, u_{m}>$ and $ <v_5, v_6, \dots, v_{m}>$ and the edges $u_iv_i ,$ for $ i = 5, 6,...,m,$ between them, is  bipancyclic. Therefore $G$ contains cycles of every even length from 4 to $2m - 8.$   The cycles  in $G$ of lengths $ 2m-6, 2m-4, 2m-2$ and $2m$ are $ <u_3,\dots, u_{m - 1}, v_{m - 1}, \dots v_3, u_3>, $ \\ $
  <u_3, \dots,  u_m, v_m, \dots,  v_3, u_3>,  <u_2, \dots  u_m, v_m, \dots, v_2, u_2>$ and \\
  $ <u_3, \dots u_m, u_1, u_2, v_2, v_1, v_m, \dots, v_3, u_3>,$ respectively. 
  
	We now construct cycles of odd lengths using edges $u_1v_4$ and $u_4v_1.$  A cycle of length 5 is  $<u_2, u_1, v_4, v_3, v_2, u_2>.$ For  $5 \leq i \leq m,$  $<u_4, v_1, v_2, v_3, ..., v_i, u_i, u_{i-1}, ..., u_4 >$ is a cycle of length $2i - 3.$  Thus we have constructed the cycles of all odd lengths from 5 to $2m-3.$ Finally, a cycle of length $2m-1$ is  $ <u_3, u_4, \dots, u_m, u_1, v_4, v_5, \dots, v_m, v_1, v_2, v_3, u_3>.$ Hence $G$ is pancyclic.
\end{proof}
\begin{cor}
A ladder-like graph is 3-connected and 4-pancyclic.
\end{cor}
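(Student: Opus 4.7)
The plan is to derive the corollary directly from Lemma \ref{l2}, with essentially no new work required. Let $L$ be any ladder-like graph on $2m$ vertices ($m \geq 6$) and set $G = L - \{u_1v_1, u_4v_4\}$. By Lemma \ref{l2}, $G$ is 3-connected and 4-pancyclic on the vertex set $V(L)$. Since $L$ is obtained from $G$ by adding back the two edges $u_1v_1$ and $u_4v_4$, and adding edges can only increase vertex connectivity and can only create more cycles, both properties transfer from $G$ to $L$.

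More precisely, for the connectivity part I would observe that if $S \subseteq V(L)$ is any cut set of $L$ with $|S| \leq 2$, then $S$ would also be a cut set of $G \subseteq L$ of size at most $2$, contradicting the 3-connectedness of $G$ established in Lemma \ref{l2}. Hence $L$ is 3-connected. For the pancyclicity part, Lemma \ref{l2} guarantees that $G$ contains cycles of every length from $4$ to $|V(G)| = 2m$; these are also cycles of $L$ since $G$ is a subgraph of $L$ on the same vertex set. Thus $L$ is 4-pancyclic.

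There is no substantive obstacle here, which is presumably why the authors state this as a corollary rather than proving it separately: the hard content (constructing explicit cycles of every length and verifying the edge-connectivity argument that bootstraps to vertex-connectivity through regularity) has already been handled inside Lemma \ref{l2}. The only thing to keep in mind while writing is that the two removed edges $u_1v_1$ and $u_4v_4$ are indeed present in the full ladder-like graph by part (ii) of Definition \ref{d1}, so $G$ is genuinely a spanning subgraph of $L$ and the monotonicity arguments above are legitimate.
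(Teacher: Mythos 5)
Your proposal is correct and matches the paper's intent exactly: the corollary is stated without proof immediately after Lemma \ref{l2}, precisely because adding back the edges $u_1v_1$ and $u_4v_4$ to the spanning subgraph $G$ can only preserve 3-connectedness and the cycles of every length from $4$ to $2m$. Your additional care in noting that $G$ spans $V(L)$ so the monotonicity argument is legitimate is a sensible, if unstated, part of the paper's reasoning.
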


	
We continue using the notations given in the Definition \ref{d1}. We now give two types of  constructions to get a new ladder-like graph from the two copies of a ladder-like graph.

%
%

\begin{lem}\label{l3}
	Let $n \geq 5.$ Consider the augmented cube $AQ_n$ as $AQ_n = AQ_{n-1}^0 \cup AQ_{n-1}^1 \cup E_n^h \cup E_n^c.$  Let $l_1$ be a spanning ladder-like subgraph of $ AQ_{n-1}^0$ and $l_1'$ be the corresponding subgraph of $AQ_{n-1}^1.$ Let $l_2$ be a spanning ladder-like subgraph of $ AQ_{n-1}^0$ with special 4-cycle and $l_2'$ be the corresponding subgraph of $AQ_{n-1}^1.$ Then we can construct 
	
	(i) a spanning ladder-like subgraph  $L_1$ of $ AQ_{n}$ from $l_1,$  $ l_1'$ and using four edges of $E_n^h;$ 
	
	(ii) a spanning ladder-like subgraph $L_2$ of $ AQ_{n}$ with special 4-cycle, from $l_2,$ $ l_2'$ and using four edges of $E_n^c.$ 
\end{lem}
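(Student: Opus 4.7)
For both parts the strategy will be the same: take two copies of the input ladder-like graph, one in each half $AQ_{n-1}^0$ and $AQ_{n-1}^1$, break a single edge from each of the four old cycles ($Z_1,Z_2\subseteq l_i$ and $Z_1',Z_2'\subseteq l_i'$), and reconnect them into two cycles of length $2m$ using four "vertical" matching edges between the halves, taken from $E_n^h$ in part (i) and from $E_n^c$ in part (ii). In part (i) I take $l_1'$ to be the hypercube image of $l_1$, so that $u_i'=u_i^h$ and $v_i'=v_i^h$, and in part (ii) I take $l_2'$ to be the complementary image of $l_2$, so that $u_i'=u_i^c$ and $v_i'=v_i^c$; in both cases $l_i'$ is the image of $l_i$ under the matching used for the gluing.

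\textbf{Part (i).} I would delete the closing edges $u_1u_m,v_1v_m$ from $l_1$ and $u_1'u_m',v_1'v_m'$ from $l_1'$, and add the four hypercube edges $u_1u_1^h,u_mu_m^h,v_1v_1^h,v_mv_m^h\in E_n^h$. On the $u$-side this rejoins into the single cycle $u_1,u_2,\dots,u_m,u_m',u_{m-1}',\dots,u_1',u_1$ of length $2m$, and similarly on the $v$-side. Relabel by $u_i^{\text{new}}:=u_i$ for $1\le i\le m$ and $u_{m+i}^{\text{new}}:=u_{m+1-i}'$ for $1\le i\le m$ (and analogously on the $v$-side); these become the cycles $Z_1^{\text{new}},Z_2^{\text{new}}$, the rungs of $l_1$ and $l_1'$ become the rungs $u_i^{\text{new}}v_i^{\text{new}}$, and the extras $u_1v_4,u_4v_1$ of $l_1$ land at precisely the required positions $u_1^{\text{new}}v_4^{\text{new}},u_4^{\text{new}}v_1^{\text{new}}$. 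Finally I discard the two misplaced primed extras $u_1'v_4',u_4'v_1'$ of $l_1'$, which under the new labeling sit at positions $(2m,2m-3)$ and $(2m-3,2m)$; this is exactly the removal needed to drive the primed corner degrees back down to $3$, so that $L_1$ has the exact degree pattern demanded by Definition \ref{d1}.

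\textbf{Part (ii).} I would run the identical construction on $l_2,l_2'$ with the four gluing edges replaced by the complementary edges $u_1u_1^c,u_mu_m^c,v_1v_1^c,v_mv_m^c\in E_n^c$; note that by the choice of $l_2'$ one has $u_m'=u_m^c$ and $v_m'=v_m^c$. Ladder-likeness of $L_2$ follows by the same degree and structural check as in part (i). For the special $4$-cycle (in the sense of $AQ_n$-complementarity as in Definition \ref{d1}) I would exhibit the $4$-cycle $\langle u_m^{\text{new}},u_{m+1}^{\text{new}},v_{m+1}^{\text{new}},v_m^{\text{new}}\rangle=\langle u_m,u_m^c,v_m^c,v_m\rangle$, whose four edges are the two complementary gluing edges $u_mu_m^c,v_mv_m^c\in E_n^c$ and the two rungs $u_mv_m\in l_2$, $u_m^cv_m^c\in l_2'$. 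Since $u_{m+1}^{\text{new}}=(u_m^{\text{new}})^c$ and $v_{m+1}^{\text{new}}=(v_m^{\text{new}})^c$ and $m\ge 6>5$, this is a special $4$-cycle at position $t=m\ge 5$.

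\textbf{Main obstacle.} Most of the work is routine bookkeeping; the delicate part is the degree control. After the four matching edges are added, the four primed corners $u_1',u_4',v_1',v_4'$ would each acquire degree four at positions of the new labeling that are not the privileged positions $1$ and $4$, so one must drop exactly the primed extras $u_1'v_4',u_4'v_1'$ to restore the correct degree sequence. In part (ii) the additional subtlety is that the "$u^c$" appearing in Definition \ref{d1} for the special $4$-cycle must be interpreted as the $AQ_n$-complementary partner (not the $AQ_{n-1}$-inherited one), which is precisely why the gluing must use edges of $E_n^c$ and why $l_2'$ is chosen to be the complementary image of $l_2$; the new special $4$-cycle then appears automatically at the seam created at position $m$ by the gluing.
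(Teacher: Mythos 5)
Your part (i) is correct and is essentially the paper's construction: you break one pair of opposite cycle edges in each copy, rejoin with four edges of $E_n^h$, and delete the two primed diagonals $u_1'v_4'$, $u_4'v_1'$ to restore the degree pattern. The paper breaks the interior edges $u_su_{s+1}$, $v_su_{s+1}$-analogues for a fixed $s\in\{5,\dots,m-1\}$ rather than the closing edges $u_1u_m$, $v_1v_m$, but your variant checks out position by position under your relabeling.

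Part (ii), however, has a genuine gap. You silently redefine $l_2'$ to be the \emph{complementary} image of $l_2$ (so that $u_i'=u_i^c$), whereas ``the corresponding subgraph'' in this paper always means the hypercube image ($u_i'=u_i^h$); that is the version the lemma must deliver, because in the inductive steps (Propositions 4.1, 5.1, 6.1) the only ladder-like subgraph guaranteed to be available in $AQ_{n-1}^1$ is the hypercube image of $l_2$, not its bitwise complement, so your $L_2$ need not lie inside the graph $K$ to which the lemma is later applied. The tell-tale sign is that your construction never uses the special-4-cycle hypothesis on $l_2$: the entire purpose of that hypothesis is to locate four edges of $E_n^c$ joining $l_2$ to its \emph{hypercube} image at adjacent positions. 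Concretely, if $a_{t+1}=a_t^c$ inside $AQ_{n-1}^0$, then in $AQ_n$ the complement of $a_t$ is $a'_{t+1}$ (the hypercube image of $a_{t+1}$), and symmetrically for $a_{t+1}$, $b_t$, $b_{t+1}$; hence $F_2=\{a_ta'_{t+1},\,a_{t+1}a'_t,\,b_tb'_{t+1},\,b_{t+1}b'_t\}\subset E_n^c$ is a set of four ``crossed'' edges at the seam $t$. Deleting $a_ta_{t+1}$, $b_tb_{t+1}$ and their primed counterparts and inserting $F_2$ glues the two copies into one ladder-like graph and simultaneously produces the new special 4-cycle $<a_t,\,a'_{t+1},\,b'_{t+1},\,b_t,\,a_t>$ with respect to $AQ_n$-complementation. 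That observation is the key idea your proposal is missing.
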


\begin{proof}
Let $m = 2^{n-2}.$ Suppose the vertices of $l_1$ are $u_i$ and $v_i, $ $1 \leq i \leq  m,$  as in Definition \ref{d1}. Then $l_1'$ is a spanning ladder-like subgraph of $AQ_{n-1}^1$ corresponding to $l_1$ with vertices $u_i'$ corresponding to $u_i$ and $v_i'$ corresponding to $v_i$ for $1\leq i \leq m.$
Choose $s$ from the set $\{5, 6,\dots , m-1\}$ and fix it. Then $F_1 = \{u_su'_s, v_sv'_s, u_{s+1}u'_{s+1}, v_{s+1}v'_{s+1} \} \subset E_n^h.$ 

We obtain the graph $L_1$ from $l_1, l_1'$ and $F_1$ as follows.

\noindent	
	Let
	$L_1 = (l_1 -\{u_s u_{s+1},\; v_s v_{s+1}\})~\cup~( l_1' -\{u'_su'_{s+1},\; v'_sv'_{s+1},\; u'_1v'_4,\; u'_4v'_1\})~\cup~F_1$ (see Figure 3(a)).\\

		\begin{center}
			\vspace*{-0.19 in}
		\begin{tikzpicture}[scale=0.8]
		
		\draw [fill=black] (2,0) circle  (.08) node [left]  at (2,0) {$u_{m}$};
		\draw [fill=black] (2,1) circle  (.08) node [left]  at (2,1) {$u_{s+1}$}; 
		\draw [fill=black] (2,2) circle  (.08) node [left]  at (2,2) {$u_{s}$}; 
		\draw [fill=black] (2,3) circle  (.08) node [left]  at (2,3) {$u_{5}$}; 
		\draw [fill=black] (2,4) circle  (.08) node [left]  at (1.9,4) {$u_{4}$}; 
		\draw [fill=black] (2,5) circle  (.08) node [left]  at (2,5) {$u_3$}; 
		\draw [fill=black] (2,6) circle  (.08) node [left]  at (2,6) {$u_2$}; 
		\draw [fill=black] (2,7) circle  (.08) node [left]  at (2,7) {$u_1$};

		\draw [fill=black] (5,0) circle  (.08)  node [right]  at (5,0) {$v_{m}$};
		\draw [fill=black] (5,1) circle  (.08)  node [right]  at (5,1) {$v_{s+1}$};
		\draw [fill=black] (5,2) circle  (.08)  node [right]  at (5.3,1.9) {$v_{s}$};
		\draw [fill=black] (5,3) circle  (.08)  node [right]  at (5,2.9) {$v_5$};
		\draw [fill=black] (5,4) circle  (.08)  node [right]  at (5,4) {$v_{4}$};
		\draw [fill=black] (5,5) circle  (.08)  node [right]  at (5,5) {$v_3$};
		\draw [fill=black] (5,6) circle  (.08)  node [right]  at (5,6) {$v_2$};
		\draw [fill=black] (5,7) circle  (.08)  node [right]  at (5,7) {$v_1$};

		\draw [style= dotted, thick] (2,0)--(2,1);
		\draw [style= dotted, thick] (5,0)--(5,1);
	
		\draw [style= dotted, thick] (2,2)--(2,3);
		\draw [style= dotted, thick] (5,2)--(5,3);
		\draw [style= thin ] (2,3)--(2,7);
		\draw [style= thin] (5,3)--(5,7);
		
		\draw [style= thin] (2,0)--(5,0);
		\draw [style= thin] (2,1)--(5,1);
		\draw [style= thin] (2,2)--(5,2);
		\draw [style= thin] (2,3)--(5,3);
		\draw [style= thin] (2,4)--(5,4);
		\draw [style= thin] (2,5)--(5,5);
		\draw [style= thin] (2,6)--(5,6);
		\draw [style= thin] (2,7)--(5,7);

		\draw [style= thin] (2,0)..controls(1, 4)..(2,7);
		\draw [style= thin] (5,0)..controls(6, 4)..(5,7);
		
		\draw [style= thin] (2,7)--(5,4);
		\draw [style= thin] (2,4)--(5,7);

		
		\draw [fill=black] (8,0) circle  (.08) node [left]  at (8,0) {$u'_{m}$};
		\draw [fill=black] (8,1) circle  (.08) node [left]  at (8,1) {$u'_{s+1}$}; 
		\draw [fill=black] (8,2) circle  (.08) node [left]  at (7.85,1.9) {$u'_{s}$}; 
		\draw [fill=black] (8,3) circle  (.08) node [left]  at (8.05,2.9) {$u'_{5}$}; 
		\draw [fill=black] (8,4) circle  (.08) node [left]  at (7.85,4) {$u'_{4}$}; 
		\draw [fill=black] (8,5) circle  (.08) node [left]  at (8,5) {$u'_3$}; 
		\draw [fill=black] (8,6) circle  (.08) node [left]  at (8,6) {$u'_2$}; 
		\draw [fill=black] (8,7) circle  (.08) node [left]  at (8,7) {$u'_1$}; 

		\draw [fill=black] (11,0) circle  (.08)  node [right]  at (11,0) {$v'_{m}$};
		\draw [fill=black] (11,1) circle  (.08)  node [right]  at (11,1) {$v'_{s+1}$};
		\draw [fill=black] (11,2) circle  (.08)  node [right]  at (11.19,1.9) {$v'_{s}$};
		\draw [fill=black] (11,3) circle  (.08)  node [right]  at (11,3) {$v'_5$};
		\draw [fill=black] (11,4) circle  (.08)  node [right]  at (11,4) {$v'_{4}$};
		\draw [fill=black] (11,5) circle  (.08)  node [right]  at (11,5) {$v'_3$};
		\draw [fill=black] (11,6) circle  (.08)  node [right]  at (11,6) {$v'_2$};
		\draw [fill=black] (11,7) circle  (.08)  node [right]  at (11,7) {$v'_1$};

		\draw [style= dotted, thick] (8,0)--(8,1);
		\draw [style= dotted, thick] (11,0)--(11,1);
		\draw [style= dotted, thick] (8,2)--(8,3);
		\draw [style= dotted, thick] (11,2)--(11,3);
		\draw [style= thin ] (8,3)--(8,7);
		\draw [style= thin] (11,3)--(11,7);
		\draw [style= thin] (8,0)--(11,0);
		\draw [style= thin] (8,1)--(11,1);
		\draw [style= thin] (8,2)--(11,2);
		\draw [style= thin] (8,3)--(11,3);
		\draw [style= thin] (8,4)--(11,4);
		\draw [style= thin] (8,5)--(11,5);
		\draw [style= thin] (8,6)--(11,6);
		\draw [style= thin] (8,7)--(11,7);
		
		\draw [style= thin] (8,0)..controls(7, 4)..(8,7);
		\draw [style= thin] (11,0)..controls(12, 4)..(11,7);
		
		
		\draw [style= thin] (2,1)..controls(5, 1.8)..(8,1);
		\draw [style= thin] (2,2)..controls(5, 2.8)..(8,2);
		\draw [style= thin] (5,1)..controls(8, 1.8)..(11,1);
		\draw [style= thin] (5,2)..controls(8, 2.8)..(11,2);
		
		\end{tikzpicture}
		
		\hspace{0.001in} $l_1$ \hspace{1.9in } $l_1'$
		
		\hspace{0.3in} {Figure 3(a): Construction of $L_1$}
		\end{center}
		
		
		We show that the graph $L_1$ constructed above is a ladder-like graph on $4m = 2^n$ vertices. Observe that $ V(L_1) = \displaystyle \cup_{i=1}^{m}\{\;u_i, \;u_i', \;v_i,\; v_i'\;\}.$ Relabel the vertices of $L_1$ as follows. Let 
			
			\begin{displaymath}	
			\begin{tabular}{lllll}
			$x_i = \begin{cases}
			u_i & {\rm if}~~1 \leq i \leq s\\
			u'_{2s-i+1} & {\rm if}~~s+1 \leq i \leq 2s\\
			u'_{m+2s-i+1} & {\rm if}~~2s+1 \leq i \leq m+s\\
			u_{i-m} & {\rm if}~~m+s+1 \leq i \leq 2m\\
			\end{cases} $
			& & & & ${\rm and }~~~y_i = \begin{cases}
			v_i & {\rm if}~~1 \leq i \leq s\\
			v'_{2s-i+1} & {\rm if}~~s+1 \leq i \leq 2s\\
			v'_{m+2s-i+1} & {\rm if}~~2s+1 \leq i \leq m+s\\
			v_{i-m} & {\rm if}~~m+s+1 \leq i \leq 2m\\
			\end{cases} $\\
			
			\end{tabular}
			\end{displaymath}
			
			Then $ Z_1 = <x_1, x_2, \dots, x_{2m}, x_1>$  and  $ Z_2 = <y_1, y_2, \dots, y_{2m}, y_1>$ are cycles in $L_1.$ Observe that $L_1 = Z_1 \;\cup\; Z_2 \;\cup \;\{\;x_i y_i \colon i = 1, 2,\dots, 2m\;\} \;\cup \; \{\;x_1y_4,\; x_4y_1\;\}.$ Further, subgraph $B_8$ of $l_1$ which is induced by $\{u_1, u_2, u_3, u_4, v'_1, v'_2,  v'_3, v'_4\}$ is now induced by vertices $\{x_1, x_2, x_3, x_4, y_1, y_2, y_3, y_4\}.$ This shows that $L_1$ is a ladder-like graph on $4m$ vertices. 
			
			Now we construct a ladder-like subgraph $L_2$ with a speial 4-cycle from $l_2$ and $l'_2.$ Suppose the vertices of $l_2$ are $a_i$ and $b_i, $ $1 \leq i \leq  m,$  as in Definition \ref{d1}. Then $l_2'$ is a spanning ladder-like subgraph of $AQ_{n-1}^1$ corresponding to $l_2$ with vertices $a_i'$ corresponding to $a_i$ and $b_i'$ corresponding to $b_i$ for $1\leq i \leq m.$
			Let $C = < a_t, a_{t+1}, b_{t+1}, b_{t}, a_{t}>$ where $t \geq 5$ and $ a_{t+1} = a_t^c,\,\, b_{t+1} = b_t^c,$ be a special 4-cycle of $l_2.$ Suppose, $C' = < a'_t, a'_{t+1}, b'_{t+1}, b'_{t}, a'_{t}>$ is the corresponding special 4-cycle of $l_2'.$ In $AQ^0_{n-1,}$ the complement of $a_t$ is $ a_{t+1}$ and the complement of $b_t$ is $b_{t+1}.$ Similarly $a'_{t+1}$ is the complement of $a'_t$ and $b'_{t+1}$ is the complement of $b'_t,$ in $AQ^1_{n-1}.$ Thus, in $AQ_n, \, a'_{t+1}$ is the complement of $a_t, \, a'_t$ is the complement of $a_{t+1}, \, b'_{t+1}$ is the complement of $b_t$ and $b'_t$ is the complement of $b_{t+1}.$ Then $F_2 = \{a_ta'_{t+1}, b_tb'_{t+1}, a_{t+1}a'_{t}, b_{t+1}b'_{t} \} \subset E_n^c.$  
			Now, we obtain the ladder-like graph $L_2$ with a special 4-cycle from $l_2, l_2'$ and using $F_2$ as follows.
			
		$L_2 = (l_2 -\{a_t a_{t+1},\; b_t b_{t+1}\})~\cup~( l_2' -\{a'_ta'_{t+1},\; b'_tb'_{t+1},\; a'_1b'_4,\; a'_4b'_1\})~\cup~F_2.$ (see Figure 3(b)).
		
		\begin{center} 
		\begin{tikzpicture}[scale=0.8]
		\draw [fill=black] (2,0) circle  (.08) node [left]  at (2,0) {$a_{m}$};
		\draw [fill=black] (2,1) circle  (.08) node [left]  at (1.9,1) {$a_{t+1}$}; 
		\draw [fill=black] (2,2) circle  (.08) node [left]  at (2.1,2) {$a_{t}$}; 
		\draw [fill=black] (2,3) circle  (.08) node [left]  at (2,3) {$a_{5}$}; 
		\draw [fill=black] (2,4) circle  (.08) node [left]  at (1.9,4) {$a_{4}$}; 
		\draw [fill=black] (2,5) circle  (.08) node [left]  at (2,5) {$a_3$}; 
		\draw [fill=black] (2,6) circle  (.08) node [left]  at (2,6) {$a_2$}; 
		\draw [fill=black] (2,7) circle  (.08) node [left]  at (2,7) {$a_1$}; 

		\draw [fill=black] (5,0) circle  (.08)  node [right]  at (5,0) {$b_{m}$};
		\draw [fill=black] (5,1) circle  (.08)  node [right]  at (5.2,1.1) {$b_{t+1}$};
		\draw [fill=black] (5,2) circle  (.08)  node [right]  at (4.9,1.8) {$b_{t}$};
		\draw [fill=black] (5,3) circle  (.08)  node [right]  at (4.95,2.8) {$b_5$};
		\draw [fill=black] (5,4) circle  (.08)  node [right]  at (5,4) {$b_{4}$};
		\draw [fill=black] (5,5) circle  (.08)  node [right]  at (5,5) {$b_3$};
		\draw [fill=black] (5,6) circle  (.08)  node [right]  at (5,6) {$b_2$};
		\draw [fill=black] (5,7) circle  (.08)  node [right]  at (5,7) {$b_1$};

		\draw [style= dotted] (2,0)--(2,1);
		\draw [style= dotted] (5,0)--(5,1);
		\draw [style= dotted] (2,2)--(2,3);
		\draw [style= dotted] (5,2)--(5,3);
		\draw [style= thin ] (2,3)--(2,7);
		\draw [style= thin] (5,3)--(5,7);
		\draw [style= thin] (2,0)--(5,0);
		\draw [style= thin] (2,1)--(5,1);
		\draw [style= thin] (2,2)--(5,2);
		\draw [style= thin] (2,3)--(5,3);
		\draw [style= thin] (2,4)--(5,4);
		\draw [style= thin] (2,5)--(5,5);
		\draw [style= thin] (2,6)--(5,6);
		\draw [style= thin] (2,7)--(5,7);
		
		\draw [style= thin] (2,0)..controls(1, 4)..(2,7);
		\draw [style= thin] (5,0)..controls(6, 4)..(5,7);
		
		\draw [style= thin] (2,7)--(5,4);
		\draw [style= thin] (2,4)--(5,7);

		
		\draw [fill=black] (8,0) circle  (.08) node [left]  at (8,0) {$a'_{m}$};
		\draw [fill=black] (8,1) circle  (.08) node [left]  at (7.9,1) {$a'_{t+1}$}; 
		\draw [fill=black] (8,2) circle  (.08) node [left]  at (8.2,1.6) {$a'_{t}$}; 
		\draw [fill=black] (8,3) circle  (.08) node [left]  at (8.05,2.9) {$a'_{5}$}; 
		\draw [fill=black] (8,4) circle  (.08) node [left]  at (7.85,4) {$a'_{4}$}; 
		\draw [fill=black] (8,5) circle  (.08) node [left]  at (8,5) {$a'_3$}; 
		\draw [fill=black] (8,6) circle  (.08) node [left]  at (8,6) {$a'_2$}; 
		\draw [fill=black] (8,7) circle  (.08) node [left]  at (8,7) {$a'_1$}; 

		\draw [fill=black] (11,0) circle  (.08)  node [right]  at (11,0) {$b'_{m}$};
		\draw [fill=black] (11,1) circle  (.08)  node [right]  at (11.1,1) {$b'_{t+1}$};
		\draw [fill=black] (11,2) circle  (.08)  node [right]  at (10.95,1.9) {$b'_{t}$};
		\draw [fill=black] (11,3) circle  (.08)  node [right]  at (11,3) {$b'_5$};
		\draw [fill=black] (11,4) circle  (.08)  node [right]  at (11,4) {$b'_{4}$};
		\draw [fill=black] (11,5) circle  (.08)  node [right]  at (11,5) {$b'_3$};
		\draw [fill=black] (11,6) circle  (.08)  node [right]  at (11,6) {$b'_2$};
		\draw [fill=black] (11,7) circle  (.08)  node [right]  at (11,7) {$b'_1$};

		\draw [style= dotted] (8,0)--(8,1);
		\draw [style= dotted] (11,0)--(11,1);
		\draw [style= dotted] (8,2)--(8,3);
		\draw [style= dotted] (11,2)--(11,3);
		\draw [style= thin ] (8,3)--(8,7);
		\draw [style= thin] (11,3)--(11,7);
		\draw [style= thin] (8,0)--(11,0);
		\draw [style= thin] (8,1)--(11,1);
		\draw [style= thin] (8,2)--(11,2);
		\draw [style= thin] (8,3)--(11,3);
		\draw [style= thin] (8,4)--(11,4);
		\draw [style= thin] (8,5)--(11,5);
		\draw [style= thin] (8,6)--(11,6);
		\draw [style= thin] (8,7)--(11,7);
		
		\draw [style= thin] (8,0)..controls(7, 4)..(8,7);
		\draw [style= thin] (11,0)..controls(12, 4)..(11,7);
		
		
		\draw [style= thin] (2,1)..controls(5.5, 0.1)..(8,2);
		\draw [style= thin] (2,2)..controls(5, 2.8)..(8,1);
		\draw [style= thin] (5,1)..controls(8.5, 0.1)..(11,2);
		\draw [style= thin] (5,2)..controls(8, 2.8)..(11,1);
		
		\end{tikzpicture}
		
		\hspace{0.001in} $l_2$ \hspace{1.9in } $l_2'$
			
		\hspace{0.3in} {Figure 3(b): Construction of $L_2$}
		\end{center}

To prove that $L_2$  is a ladder-like graph, we relabel again the vertices of $L_2$ as follows. Let
	
	\begin{displaymath}	
	\begin{tabular}{lllll}
	$p_i =\begin{cases}
	a_i & {\rm if}~~1 \leq i \leq t\\
	a'_i & {\rm if}~~t+1 \leq i \leq m\\
	a'_{i-m} & {\rm if}~~m+1 \leq i \leq m+t\\
	a_{i-m} & {\rm if}~~m+t+1 \leq i \leq 2m\\
	\end{cases} $
	& & & & ${\rm and}~~~q_i =\begin{cases}
	b_i & {\rm if}~~1 \leq i \leq t\\
	b'_i & {\rm if}~~t+1 \leq i \leq m\\
	b'_{i-m} & {\rm if}~~m+1 \leq i \leq m+t\\
	b_{i-m} & {\rm if}~~m+t+1 \leq i \leq 2m\\
	\end{cases} $
	
	\end{tabular}
	\end{displaymath}
	
	Clearly, $L_2 = \;Z_3 \;\cup\; Z_4 \; \cup \; \{\; p_i \; q_i \;\colon \; i =1, 2, \dots, 2m\; \} \;\cup \;\{\;p_1q_4,\; q_1p_4\;\},$ where	
	$ Z_3 = <p_{1}, p_{2}, \dots, p_{2m}, p_1 >$ and  $Z_4 =  <q_1, q_2, \dots, q_{2m}, q_1>$ are cycles in $L_2.$ Also, the subgraph of $L_2$  induced by the vertex set $\{ p_1, p_2, p_3, p_4, q_1, q_2, q_3, q_4\}$ is $B_8.$\\
	Due to renaming we have, $p_t = a_t,\, \, p_{t+1} = a'_{t+1}, \,\, q_{t+1} = b'_{t+1}$ and $q_t = b_t.$  Hence, 
	
	$<p_t, \,p_{t+1},\, q_{t+1}, \,q_t>$ is a special 4-cycle in $L_2.$ Thus $L_2$ is a ladder-like graph on $4m$ vertices.
	\end{proof}

\begin{cor}
	For $n \geq 4,$ the augmented cube $AQ_n$ contains a spanning ladder-like subgraph and a spanning ladder-like subgraph with a special 4-cycle. 
\end{cor}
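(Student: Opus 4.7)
My plan is induction on $n \geq 4$, with Lemma \ref{l3} serving as the inductive engine.

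For the base case $n=4$, I would take the explicit spanning subgraph $L$ of $AQ_4$ described in the text just before Lemma \ref{l2}. Its two $8$-cycles $Z_1 = \langle 0010,\, 1101,\, 1110,\, 0110,\, 0100,\, 1011,\, 1000,\, 0000\rangle$ and $Z_2 = \langle 0101,\, 1010,\, 1001,\, 0001,\, 0011,\, 1100,\, 1111,\, 0111\rangle$, together with the matching edges between corresponding vertices of the two sides and the crossover edges $u_1v_4, u_4v_1$, would be checked against conditions (i)--(iii) of Definition \ref{d1}. The $4$-cycle $C = \langle 0100,\, 1011,\, 1100,\, 0011\rangle$ inside $L$, formed by two pairs of complementary vertices in $AQ_4$, simultaneously witnesses the special-$4$-cycle condition. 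Thus a single subgraph $L$ serves as both a spanning ladder-like subgraph of $AQ_4$ and as one with a special $4$-cycle, with $m = 8 \geq 6$.

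For the inductive step $n \geq 5$, I would write $AQ_n = AQ_{n-1}^0 \cup AQ_{n-1}^1 \cup E_n^h \cup E_n^c$. By the induction hypothesis, $AQ_{n-1}$ contains both a spanning ladder-like subgraph $l_1$ and a spanning ladder-like subgraph $l_2$ with a special $4$-cycle, which I would place inside $AQ_{n-1}^0$. The canonical isomorphism $AQ_{n-1}^0 \to AQ_{n-1}^1$ that flips the leading bit transports these to the corresponding subgraphs $l_1', l_2'$ of $AQ_{n-1}^1$. Lemma \ref{l3}(i) applied to the pair $(l_1, l_1')$ then delivers a spanning ladder-like subgraph $L_1$ of $AQ_n$, and Lemma \ref{l3}(ii) applied to $(l_2, l_2')$ delivers a spanning ladder-like subgraph $L_2$ of $AQ_n$ with a special $4$-cycle, completing the inductive step.

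The only obstacle to monitor is the size parameter: Definition \ref{d1} and the index choices $s, t \in \{5, \dots, m-1\}$ required inside Lemma \ref{l3} both demand $m \geq 6$. Since the spanning ladder-like subgraph used at step $n-1$ has $2m = 2^{n-1}$ vertices with $m = 2^{n-2}$, one has $m \geq 8$ whenever $n \geq 5$, and the base case $n = 4$ already gives $m = 8$, so the constraint is comfortably met throughout. The substantive work is therefore concentrated in verifying the explicit $AQ_4$ example; the rest is iterated application of Lemma \ref{l3}.
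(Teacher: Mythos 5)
Your proposal is correct and is exactly the argument the paper intends: the corollary is stated without proof because it follows by induction from Lemma \ref{l3}, with the explicit spanning ladder-like subgraph of $AQ_4$ (Figure 4(b), with its special $4$-cycle $\langle 0100, 1011, 1100, 0011, 0100\rangle$) serving as the base case. Your check of the size constraint $m \geq 6$ and of the index ranges for $s$ and $t$ is a welcome explicit verification of details the paper leaves implicit.
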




\begin{lem}\label{l4}
	Let $H_1$ and $H_2$ be vertex disjoint k-connected graphs with $V(H_1) = \{ u_1, u_2, \dots, u_n\}$ and $V(H_2) = \{v_1, v_2, \dots, v_n\}$ and let $G$ be the graph obtained from $H_1 \cup H_2$ by adding the edges $u_i v_i$ for $i = 1, 2, \dots, n.$ Then $G$ is $(k+1)$-connected.
\end{lem}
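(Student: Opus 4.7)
The plan is to verify the connectivity condition directly: show that for every vertex subset $S \subseteq V(G)$ with $|S| \leq k$, the graph $G - S$ is connected. Since $H_1$ is $k$-connected it has at least $k+1$ vertices, hence $n \geq k+1$, so $|V(G)| = 2n \geq k+2$ and the definition of $(k+1)$-connectivity is not vacuous. Write $S_1 = S \cap V(H_1)$ and $S_2 = S \cap V(H_2)$, and split on the sizes of $S_1$ and $S_2$.

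The first case is $|S_1| \leq k-1$ and $|S_2| \leq k-1$ (which covers all splits where neither part equals $k$). Here both $H_1 - S_1$ and $H_2 - S_2$ are connected, because each $H_i$ is $k$-connected and strictly fewer than $k$ vertices have been removed. It remains to produce a matching edge $u_iv_i$ with $u_i \notin S_1$ and $v_i \notin S_2$; such an index $i$ exists because the number of matching edges killed is at most $|S_1| + |S_2| \leq k < n$, so at least $n-k \geq 1$ edges survive. Any such surviving edge glues the two connected pieces into one.

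The remaining cases are the extremes $|S_1| = k, |S_2| = 0$ and $|S_1| = 0, |S_2| = k$, handled symmetrically. In the first of these $H_2$ is untouched and is connected (as a $k$-connected graph with $k \geq 1$). Every vertex $u \in V(H_1) \setminus S_1$ is joined to its partner $v \in V(H_2)$ by the matching edge $uv$, which survives because $S_2 = \emptyset$. Hence every remaining vertex of $H_1$ is attached to the connected graph $H_2$, showing that $G - S$ is connected.

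There is no serious obstacle; the only subtlety is recognising that the worst case is when $S$ is concentrated in a single side, and that in this extreme case one must drop the ambition of keeping $H_1 - S_1$ itself connected and instead use the matching together with $H_2$ as a bridge. Once this observation is made the three cases exhaust all possibilities and the lemma follows.
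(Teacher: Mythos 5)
Your proof is correct and follows essentially the same strategy as the paper's: split $S$ into $S_1 = S\cap V(H_1)$ and $S_2 = S\cap V(H_2)$, use $k$-connectivity of each $H_i$ when both parts have size less than $k$, and fall back on the perfect matching into the untouched side when $S$ is concentrated entirely in one $H_i$. Your version is if anything a bit more careful than the paper's (explicitly checking $n \ge k+1$ and counting the surviving matching edges), but the underlying argument is the same.
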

\begin{proof}
	Let $S \subset V(G)$ with $|S| = k.$ Suppose $S \subseteq V(H_1).$ Then each component of $H_1 - S$ is connected to $H_2$ by an edge of type $u_i v_i$ for some $i$. Hence, $G - S$ is connected. Similarly, $G - S$ is connected if $S \subseteq V(H_2).$ Suppose $S = S_1 \cup S_2,$ where $S_1 \subset V(H_1)$ and $S_2 \subset V(H_2).$ Then $|S_1| < k$ and $|S_2| < k.$ Since $H_i$ is k-connected, $H_i - S_i$ is connected for each $i \in \{1, 2\}.$ Further, there is an edge between $H_1 - S_1$ and $H_2 - S_2.$ Hence $G - S$ is connected. Therefore $G$ is $(k+1)$-connetced.
\end{proof}

\section{ \textbf{Case $n_1 = 2$}	}
In this section, we prove Theorem \ref{2} for the special case $n_1 = 2.$ First we prove it for  $n = 4.$ 
\begin{lem}\label{3.1}
	 There exists a hamiltonian cycle $C$ in $AQ_4$ such that $AQ_4 - E(C)$ is a 5-regular, 5-connected graph containing a spanning ladder-like subgraph of $AQ_4.$
\end{lem}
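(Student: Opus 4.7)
The plan is to exhibit an explicit pair consisting of a spanning ladder-like subgraph $L$ of $AQ_4$ and a Hamiltonian cycle $C$ of $AQ_4$ that is edge-disjoint from $L$, and then to verify that $AQ_4 - E(C)$ is $5$-connected. Since $AQ_4$ is $7$-regular and a Hamiltonian cycle uses exactly two edges at each vertex, $5$-regularity is automatic, so the real content is the simultaneous existence of such $C$ and $L$ together with the connectivity bound.

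For $L$ I would take the spanning ladder-like subgraph of $AQ_4$ already constructed in the example following Lemma \ref{l2}, whose two $8$-cycles $Z_1$ and $Z_2$ are listed explicitly there and which, together with the rungs $u_iv_i$ and the two diagonals $u_1v_4, u_4v_1$, account for exactly $3\cdot 8 + 2 = 26$ of the $56$ edges of $AQ_4$. The complement $AQ_4 - E(L)$ then has $30$ edges, with the vertices corresponding to $u_1, u_4, v_1, v_4$ of degree $3$ and the remaining twelve of degree $4$. Inside this complement I would construct a Hamiltonian cycle $C$ by hand, threading between the two halves $AQ_3^0$ and $AQ_3^1$ via hypercube and complementary cross-edges from $E_4^h \cup E_4^c$ that remain available after $L$ is removed. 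A specific $C$ would be written out and its edge-disjointness from $L$ checked by direct comparison against the two listed cycles $Z_1$, $Z_2$ and the rungs of $L$.

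The substantive step is the verification that $G := AQ_4 - E(C)$ is $5$-connected. From the corollary following Lemma \ref{l2} we immediately get $\kappa(L) \geq 3$, and hence $\kappa(G) \geq 3$ because $L \subseteq G$, but this must be boosted to $5$. I would use Menger's theorem: for each unordered pair of vertices $\{x,y\}$ of $G$, exhibit five internally disjoint $x$--$y$ paths built from the two cycles $Z_1, Z_2$ of $L$, the rungs, the two diagonals, and the $40 - 26 = 14$ additional edges of $G$ that lie outside $L$. A short case split on whether $x$ and $y$ lie in the same copy of $AQ_3$ inside $AQ_4$ or in opposite copies keeps the enumeration manageable, since the hypercube and complementary cross-edges not used by $C$ provide independent connections between the two halves.

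The main obstacle is precisely this $5$-connectivity check: the $3$-connectivity inherited from $L$ is not sufficient, and four additional internally disjoint paths must be produced. This forces the choice of $C$ to be made carefully so that enough cross-edges between $AQ_3^0$ and $AQ_3^1$, and enough redundancy around each vertex of degree $3$ in $L$, survive the removal of $C$. Once a suitable $C$ is pinned down, the rest reduces to a finite but attentive verification that $16$ vertices on a $5$-regular, $56 - 16 = 40$-edge graph admit five disjoint paths between every pair.
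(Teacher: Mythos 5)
Your overall strategy --- fix the explicit spanning ladder-like subgraph $L$ of $AQ_4$, find a Hamiltonian cycle $C$ edge-disjoint from it, and verify that $AQ_4-E(C)$ is $5$-connected --- has the right shape, and your bookkeeping is correct ($26$ edges in $L$, $30$ in its complement, $5$-regularity of $AQ_4-E(C)$ for free, $14$ edges of $AQ_4-E(C)$ outside $L$). But as written the proposal defers exactly the two steps that constitute the proof. You never exhibit $C$; for a concrete $16$-vertex graph an existence statement of this kind has no content until the cycle is written down and its edge-disjointness from $L$ checked. More seriously, the $5$-connectivity verification is only announced, not performed: running Menger's theorem over all $\binom{16}{2}=120$ pairs and producing five internally disjoint paths for each is a very large case analysis whose success depends on which Hamiltonian cycle you pick, and you concede yourself that $C$ must be ``made carefully'' for enough redundancy to survive around the degree-$3$ vertices of $L$. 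Until a specific $C$ is fixed and the disjoint paths (or some substitute argument) are actually produced, the central claim of the lemma remains unproved.

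The paper sidesteps the Menger enumeration entirely by choosing $C$ so that $H=AQ_4-E(C)$ has visible structure: $H$ decomposes as the union of two $4$-connected graphs on eight vertices each together with a perfect matching between them, whence Lemma \ref{l4} gives $5$-connectivity in one line; the spanning ladder-like subgraph is then read off from the same picture. If you want to complete your plan without an unmanageable case analysis, the efficient move is not to chase $120$ pairs of disjoint paths but to engineer your $C$ so that $AQ_4-E(C)$ admits a comparable structural decomposition to which Lemma \ref{l4} applies.
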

\begin{proof}
Consider the hamiltonian cycle $C = <0111, 1000, 1100, 0100, 0000, 1111, 1011, 0011,\\ 0010, 1010, 1110, 0001, 0101, 1101, 1001, 0110, 0111>$ of $AQ_4.$ This cycle is denoted by dashed edges in Figure 4(a). Let $H = AQ_4 - E(C).$ Then, $H$ is a spanning, 5-regular subgraph of $AQ_4.$ It is easy to see that $H$ is union of two 4-connected graphs, each on eight vertices and one perfect matching between them (see Figure 4(c)).

	\begin{tikzpicture}[scale=0.7]
	\vspace*{-7.2cm}
	\hspace*{-2.5cm}


\draw [fill=black] (0,0) circle  (.08)  node [left]  at (0,0.3) {$\textbf{0110}$};
\draw [fill=black] (3,0) circle  (.08)  node [left]  at (3,0.3) {$\textbf{0111}$};
\draw [fill=black] (5,0) circle  (.08)  node [left]  at (5,0.3) {$\textbf{1110}$};
\draw [fill=black] (8,0) circle  (.08)  node [left]  at (8,0.3) {$\textbf{1111}$};

\draw [fill=black] (0,3) circle  (.08)  node [left]  at (0,3.3) {$\textbf{0100}$};
\draw [fill=black] (3,3) circle  (.08)  node [left]  at (3,3.3) {$\textbf{0101}$};
\draw [fill=black] (5,3) circle  (.08)  node [left]  at (5,3.3) {$\textbf{1100}$};
\draw [fill=black] (8,3) circle  (.08)  node [left]  at (8,3.3) {$\textbf{1101}$};

\draw [fill=black] (0,5) circle  (.08)  node [left]  at (0,5.3) {$\textbf{0010}$};
\draw [fill=black] (3,5) circle  (.08)  node [left]  at (3,5.3) {$\textbf{0011}$};
\draw [fill=black] (5,5) circle  (.08)  node [left]  at (5,5.3) {$\textbf{1010}$};
\draw [fill=black] (8,5) circle  (.08)  node [left]  at (8,5.3) {$\textbf{1011}$};

\draw [fill=black] (0,8) circle  (.08)  node [left]  at (0,8.3) {$\textbf{0000}$};
\draw [fill=black] (3,8) circle  (.08)  node [left]  at (3,8.3) {$\textbf{0001}$};
\draw [fill=black] (5,8) circle  (.08)  node [left]  at (5,8.3) {$\textbf{1000}$};
\draw [fill=black] (8,8) circle  (.08)  node [left]  at (8,8.3) {$\textbf{1001}$};

\draw [style= dashed] ( 0,0 ) -- ( 3,0 );
\draw [style= thin] ( 0,0 ) -- ( 0,3 );
\draw [style= thin] ( 0,0 ) -- ( 3,3 );
\draw [style= thin] (0,0)..controls(-0.7, 2.5)..(0,5);
\draw [style= thin] ( 0,0 ) -- ( 3,8 );
\draw [style= thin] ( 0,0 ) ..controls(2.5, -0.7).. ( 5,0 );
\draw [style= dashed] (0,0) to [out=180,in=-150] (-1,9) to [out=30,in=160] (8,8);
\draw [style= thin] ( 0,3 ) -- ( 3,0 );
\draw [style= thin] ( 0,3 ) -- ( 3,3 );
\draw [style= thin] ( 0,3 ) -- ( 3,5 );
\draw [style= dashed] ( 0,3 ) ..controls(-0.7, 5.5)..( 0,8 );
\draw [style= thin] ( 0,3 ) -- ( 8,5 );
\draw [style= dashed] ( 0,3 ) ..controls(2.5, 4).. ( 5,3 );
\draw [style= dashed] ( 0,5 ) -- ( 3,5 );
\draw [style= thin] ( 0,5 ) -- ( 0,8 );
\draw [style= thin] ( 0,5 ) -- ( 3,8 );
\draw [style= dashed] ( 0,5 ) ..controls(2.5, 6).. ( 5,5 );
\draw [style= thin] ( 0,5 ) -- ( 8,3 );
\draw [style= thin] ( 0,5 ) -- ( 3,3 );
\draw [style= thin] ( 0,8 ) -- ( 3,5 );
\draw [style= thin] ( 0,8 ) -- ( 3,8 );
\draw [style= thin] ( 0,8 ) -- ( 3,0 );
\draw [style= thin] ( 0,8 ) ..controls(2.5, 9).. ( 5,8 );
\draw [style= dashed] (0,8) to [out=-160,in=140] (-1,-0.5) to [out=-30,in=-160] (8,0);
\draw [style= thin] ( 3,0 ) -- ( 3,3 );
\draw [style= dashed] ( 3,0 ) -- ( 5,8 );
\draw [style= thin] ( 3,0 ) ..controls(3.5, 3).. ( 3,5 );
\draw [style= thin] ( 3,0 ) ..controls(5.5, -0.7).. ( 8,0 );
\draw [style= thin] ( 3,3 ) -- ( 5,5 );
\draw [style= dashed] ( 3,3 ) ..controls(5.5, 4).. ( 8,3 );
\draw [style= dashed] ( 3,3 ) ..controls(3.5, 5.5).. ( 3,8 );
\draw [style= thin] ( 3,5 ) -- ( 3,8 );
\draw [style= dashed] ( 3,5 ) ..controls(5.5, 6).. ( 8,5 );
\draw [style= thin] ( 3,5 ) -- ( 5,3 );
\draw [style= thin] ( 3,8 ) ..controls(5.5, 9).. ( 8,8 );
\draw [style= dashed] ( 3,8 ) -- ( 5,0 );
\draw [style= thin] ( 5,0 ) -- ( 8,0 );
\draw [style= thin] ( 5,0 ) -- ( 8,3 );
\draw [style= thin] ( 5,0 ) -- ( 5,3 );
\draw [style= thin] ( 5,0 ) -- ( 8,8 );
\draw [style= dashed] ( 5,0 ) ..controls(4.5, 3.5).. ( 5,5 );
\draw [style= thin] ( 5,3 ) -- ( 8,0 );
\draw [style= thin] ( 5,3 ) -- ( 8,5 );
\draw [style= thin] ( 5,3 ) -- ( 8,3 );
\draw [style= dashed] ( 5,3 ) ..controls(4.5, 6).. ( 5,8 );
\draw [style= thin] ( 5,5 ) -- ( 5,8 );
\draw [style= thin] ( 5,5 ) -- ( 8,5 );
\draw [style= thin] ( 5,5 ) -- ( 8,8 );
\draw [style= thin] ( 5,5 ) -- ( 8,3 );
\draw [style= thin] ( 5,8 ) -- ( 8,8 );
\draw [style= thin] ( 5,8 ) -- ( 8,5 );
\draw [style= thin] ( 5,8 ) -- ( 8,0 );
\draw [style= thin] ( 8,0 ) -- ( 8,3 );
\draw [style= dashed] ( 8,0 ) ..controls(8.5, 3).. ( 8,5 );
\draw [style= dashed] ( 8,3 ) ..controls(8.5, 5.5).. ( 8,8 );
\draw [style= thin] ( 8,5 ) -- ( 8,8 );



\hspace*{-1.0cm}
\draw [style= thin, color= gray] (12,0)--(15,3);
\draw [style= thin, color= gray] (12,3)--(15,0);
\draw [style= thin, color= gray] (12,2)--(15,6);
\draw [style= thin, color= gray] (12,1)--(15,5);
\draw [style= thin, color= gray] (12,5)--(15,2);
\draw [style= thin, color= gray] (12,6)--(15,1);

\draw [style= thin, color= gray] (12,5)--(15,1);
\draw [style= thin, color= gray] (12,6)--(15,2);
\draw [style= thin, color= gray] (12,1)--(15,6);
\draw [style= thin, color= gray] (12,2)--(15,5);

\draw [style= thin, color= gray] (12,7)..controls(11.5, 5.5)..(12,4);
\draw [style= thin, color= gray] (15,0)..controls(15.5, 1.5)..(15,3);
\draw [style= thin, color= gray] (12,3)--(15,7);
\draw [style= thin, color= gray] (12,0)--(15,4);

\draw [fill=black] (12,0) circle  (.08)  node [left]  at (12,0) {${ 0000}$};
\draw [fill=black] (12,1) circle  (.08)  node [left]  at (11.9,1) {$1000$};
\draw [fill=black] (12,2) circle  (.08)  node [left]  at (11.9,2) {$1011$};
\draw [fill=black] (12,3) circle  (.08)  node [left]  at (12,3) {$0100$};
\draw [fill=black] (12,4) circle  (.08)  node [left]  at (12,4) {$0110$};
\draw [fill=black] (12,5) circle  (.08)  node [left]  at (11.9,5) {$1110$};
\draw [fill=black] (12,6) circle  (.08)  node [left]  at (12,6) {$1101$};
\draw [fill=black] (12,7) circle  (.08)  node [left]  at (12,7) {$0010$};

\draw [fill=black] (15,0) circle  (.08)  node [right]  at (15,0) {$0111$};
\draw [fill=black] (15,1) circle  (.08)  node [right]  at (15.3,1) {$1111$};
\draw [fill=black] (15,2) circle  (.08)  node [right]  at (15.2,2) {$1100$};
\draw [fill=black] (15,3) circle  (.08)  node [right]  at (15,3) {$0011$};
\draw [fill=black] (15,4) circle  (.08)  node [right]  at (15,4) {$0001$};
\draw [fill=black] (15,5) circle  (.08)  node [right]  at (15,5) {$1001$};
\draw [fill=black] (15,6) circle  (.08)  node [right]  at (15.1,6) {$1010$};
\draw [fill=black] (15,7) circle  (.08)  node [right]  at (15,7) {$0101$};

\draw [style= thick] (12,0)--(12,1);      
\draw [style= thick] (15,0)--(15,1);
\draw [style= thick] (12,1)--(12,2);
\draw [style= thick] (15,1)--(15,2);
\draw [style= thick] (12,2)--(12,7);
\draw [style= thick] (15,2)--(15,7);
\draw [style= thick] (12,0)--(15,0);
\draw [style= thick] (12,1)--(15,1);
\draw [style= thick] (12,2)--(15,2);
\draw [style= thick] (12,3)--(15,3);
\draw [style= thick] (12,4)--(15,4);
\draw [style= thick] (12,5)--(15,5);
\draw [style= thick] (12,6)--(15,6);
\draw [style= thick] (12,7)--(15,7);

\draw [style= thick] (12,0)..controls(10, 4)..(12,7);
\draw [style= thick] (15,0)..controls(17, 4)..(15,7);

\draw [style= thick] (12,7)--(15,4);
\draw [style= thick] (12,4)--(15,7);

\draw [fill=black] ( 19, 0) circle  (.08)  node [left]  at (19, 0) {${1000}$};
\draw [fill=black] (19, 1 ) circle  (.08)  node [left]  at (19, 1) {${1011}$};
\draw [fill=black] ( 19, 2) circle  (.08)  node [left]  at (19, 2) {${1110}$};
\draw [fill=black] (19, 3 ) circle  (.08)  node [left]  at (19, 3) {${1101}$};
\draw [fill=black] ( 19, 5) circle  (.08)  node [left]  at (19, 5) {${0000}$};
\draw [fill=black] (19, 6 ) circle  (.08)  node [left]  at (19, 6) {${0100}$};
\draw [fill=black] ( 19, 7) circle  (.08)  node [left]  at (19, 7) {${0110}$};
\draw [fill=black] (19, 8 ) circle  (.08)  node [left]  at (19, 8) {${0010}$};

\draw [fill=black] ( 22, 0) circle  (.08)  node [right]  at (22, 0) {${1111}$};
\draw [fill=black] (22, 1 ) circle  (.08)  node [right]  at (22, 1) {${1100}$};
\draw [fill=black] ( 22, 2) circle  (.08)  node [right]  at (22, 2) {${1001}$};
\draw [fill=black] (22, 3 ) circle  (.08)  node [right]  at (22, 3) {${1010}$};
\draw [fill=black] ( 22, 5) circle  (.08)  node [right]  at (22, 5) {${0111}$};
\draw [fill=black] (22, 6 ) circle  (.08)  node [right]  at (22, 6) {${0011}$};
\draw [fill=black] ( 22, 7) circle  (.08)  node [right]  at (22, 7) {${0001}$};
\draw [fill=black] (22, 8 ) circle  (.08)  node [right]  at (22, 8) {${0101}$};

\draw [style= thin] (19 ,0 )--(19 ,1 );
\draw [style= thin] ( 19,0 )--( 22,0 );
\draw [style= thin] ( 19,0 )--( 22,3 );
\draw [style= thin] ( 19,0 )--( 22,2 );
\draw [style= thin] ( 19, 1)--( 22, 1);
\draw [style= thin] ( 19, 1)--(22 , 2);
\draw [style= thin] ( 19, 1)--( 22, 3);
\draw [style= thin] ( 19,2 )--( 22, 0 );
\draw [style= thin] ( 19,2 )--( 22, 1);
\draw [style= thin] ( 19,2)--( 22,2 );
\draw [style= thin] ( 19,2 )--( 19,3 );
\draw [style= thin] ( 19,3 )--( 22,3 );
\draw [style= thin] ( 19,3 )--( 22,0 );
\draw [style= thin] ( 19,3 )--( 22,1 );
\draw [style= thin] ( 22, 3)--(22 , 2);
\draw [style= thin] (22 , 0)--(22 , 1);

\draw [style= thin] (19, 5 )--( 22,5 );
\draw [style= thin] ( 19,5 )--( 22,6 );
\draw [style= thin] (19 ,5 )--(22 , 7);
\draw [style= thin] ( 19,5 ) ..controls(20, 6.5)..( 19,8 );
\draw [style= thin] ( 19,6 )--(22 ,5 );
\draw [style= thin] ( 19,6 )--( 22,6 );
\draw [style= thin] ( 19,6 )--( 22,8 );
\draw [style= thin] ( 19,6 )--( 19,7 );
\draw [style= thin] ( 19,7 )--( 22,7 );
\draw [style= thin] ( 19,7 )--( 22,8 );
\draw [style= thin] ( 19,7 )--( 19,8 );
\draw [style= thin] ( 19,8 )--( 22,7 );
\draw [style= thin] ( 19,8 )--( 22,8 );
\draw [style= thin] ( 22,5 )--( 22,6 );
\draw [style= thin] ( 22,6 )--( 22,7 );
\draw [style= thin] ( 22,5 )..controls(21, 6.5)..(22 ,8 );

\draw [style= thin] ( 19, 0  )..controls(17, 3 )..(19 , 5);
\draw [style= thin] ( 19, 1 )..controls(17, 4 )..(19 , 6);
\draw [style= thin] ( 19, 2 )..controls(17, 5)..(19 , 7);
\draw [style= thin] ( 19,  3)..controls(17, 6)..(19 , 8);

\draw [style= thin] ( 22, 0 )..controls(24, 3 )..(22 , 5);
\draw [style= thin] ( 22, 1 )..controls(24, 4)..(22 , 6);
\draw [style= thin] ( 22, 2 )..controls(24, 5 )..(22 , 7);
\draw [style= thin] ( 22, 3 )..controls(24, 6 )..(22 ,8 );

\end{tikzpicture}

\hspace{0.1in} { (a).  $AQ_4 = C \cup H$} \hspace{0.9in} $\,\,~~$ {(b). $ H = AQ_4 - E(C)$} \hspace{0.4in} {(c). $ H = AQ_4 - E(C)$}

\hspace{1 in} {Figure 4: Decomposition of $AQ_4$ for $n_1 = 2$}\\

Hence, by Lemma \ref{l4}, $H$ is $5$-connected. Further, $H$ contains a spanning ladder-like subgraph as shown in Figure 4(b) by dark lines. 
\end{proof}

We now prove Theorem \ref{2} for the spacial case $n_1 = 2.$
\begin{proposition} 		
For $ n \geq 4,$ there exists a hamiltonian cycle $C$ in $AQ_n$ such that $AQ_n - E(C)$ is a spanning, $(2n-3)$-regular, $(2n-3)$-connected subgraph of $AQ_n$ and it contains a spanning ladder-like subgraph.
\end{proposition}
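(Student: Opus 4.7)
The plan is to proceed by induction on $n \geq 4$, with Lemma \ref{3.1} handling the base case $n=4$. For the inductive step with $n \geq 5$, I would use the decomposition $AQ_n = AQ_{n-1}^0 \cup AQ_{n-1}^1 \cup E_n^h \cup E_n^c$. The inductive hypothesis applied to $AQ_{n-1}^0$ furnishes a Hamiltonian cycle $C_0$ whose complement $H_0 := AQ_{n-1}^0 - E(C_0)$ is $(2n-5)$-regular, $(2n-5)$-connected, and contains a spanning ladder-like subgraph $l_1$. The natural isomorphism $x \mapsto x^h$ between $AQ_{n-1}^0$ and $AQ_{n-1}^1$ transports these data to a matching Hamiltonian cycle $C_1$, its complement $H_1$, and a spanning ladder-like subgraph $l_1'$ of $AQ_{n-1}^1$, all inheriting the desired properties.

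Next, I would build the Hamiltonian cycle $C$ of $AQ_n$ by splicing $C_0$ and $C_1$ along a single edge. Choose $uv \in E(C_0)$; then $u^h v^h \in E(C_1)$ and the hypercube edges $uu^h$, $vv^h$ together with $uv$ and $u^hv^h$ form a 4-cycle, so
\[
C := (C_0 \setminus \{uv\}) \cup (C_1 \setminus \{u^h v^h\}) \cup \{uu^h, vv^h\}
\]
is a Hamiltonian cycle of $AQ_n$, whence $H := AQ_n - E(C)$ is automatically $(2n-3)$-regular. For the ladder-like requirement, I would additionally arrange $uv$ so that the Lemma \ref{l3}(i) construction admits some index $s \in \{5, \ldots, 2^{n-2}-1\}$ whose four hypercube edges $\{u_s u_s^h, v_s v_s^h, u_{s+1} u_{s+1}^h, v_{s+1} v_{s+1}^h\}$ avoid $\{uu^h, vv^h\}$; this is easy because for any fixed $s$ only at most $8$ of the $2^{n-1}$ cycle-edges of $C_0$ are incident to the four forbidden ladder positions. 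Lemma \ref{l3}(i) then produces a spanning ladder-like subgraph $L_1$ of $AQ_n$ contained in $H$.

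For the connectivity claim, let $H_0^\ast := H_0 + uv$ and $H_1^\ast := H_1 + u^h v^h$; each remains $(2n-5)$-connected. Lemma \ref{l4} applied with the full perfect matching $E_n^c$ shows $H_0^\ast \cup H_1^\ast \cup E_n^c$ is $(2n-4)$-connected. To push the connectivity to $2n-3$, I would argue directly using the near-perfect matching $E_n^h \setminus \{uu^h, vv^h\}$ also present in $H$: for any $S \subseteq V(H)$ with $|S| = 2n-4$, write $S = S_0 \cup S_1$ with $S_i \subseteq V(H_i)$ and case-split on $\max(|S_0|, |S_1|)$. When both $|S_i| \leq 2n-6$, each $H_i^\ast - S_i$ remains connected and some surviving cross-edge links the two halves. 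When $|S_0| \in \{2n-5, 2n-4\}$ (so $|S_1| \leq 1$), the intact side $H_1^\ast - S_1$ is connected, and every vertex of $V(H_0) \setminus S_0$ has two independent cross-edges to $V(H_1)$, one in $E_n^c$ and, except for the two defective vertices $u$ and $v$, one in $E_n^h$, so at least one such edge still reaches $V(H_1) \setminus S_1$.

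The delicate point I expect to be the main obstacle is precisely this final bump of connectivity from $2n-4$ to $2n-3$: since $E_n^h \setminus \{uu^h, vv^h\}$ fails to be a perfect matching, Lemma \ref{l4} cannot be iterated a second time, and the vertex-cut analysis must handle the two defective vertices $u, v$ carefully (noting that $x^h \neq x^c$ for every $x$, so at most one of the two cross-neighbours of any given vertex can coincide with a fixed element of $S_1$) in order to rule out small disconnecting cuts.
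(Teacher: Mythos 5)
Your proposal is correct and follows essentially the same route as the paper: induction on $n$ with Lemma 3.1 as the base case, splicing the two inherited Hamiltonian cycles along a matched pair of edges $uv$ and $u^hv^h$, invoking Lemma \ref{l3}(i) to obtain the spanning ladder-like subgraph, and a case analysis on $|S_0|, |S_1|$ for the $(2n-3)$-connectivity. Your added care in choosing the splicing edge so that the four hypercube edges of the Lemma \ref{l3}(i) construction avoid $\{uu^h, vv^h\}$, and in handling the two defective vertices $u,v$ in the cut analysis, only makes explicit points the paper passes over silently.
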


\begin{proof}
We proceed by induction on $n.$ By Lemma \ref{3.1}, result is true for $n=4.$ Suppose $n \geq 5.$  Write $AQ_n$ as $AQ_n = AQ_{n-1}^0 \cup AQ_{n-1}^1 \cup E_n^h \cup E_n^c.$ By induction hypothesis, suppose the result is true for $AQ_{n-1}.$ Let $C_0$ be a hamiltonian cycle of $AQ_{n-1}^0$ such that $AQ_{n-1}^0 - E(C_0)$ is a spanning $(2n -5)$-regular and $(2n-5)$-connected containing a spanning ladder-like subgraph $L.$ Let $C_1$ be the corresponding hamiltonian cycle in $AQ_{n-1}^1.$ Then, $AQ_{n-1}^1 - E(C_1)$ is a spanning, $(2n -5)$-regular and $(2n-5)$-connected. Let $L'$ be the corresponding spanning ladder-like subgraph in $AQ_{n-1}^1 - E(C_1).$
 Let $a b \in E(C_0)$ and let $a'b' $ be the corresponding edge in $E(C_1).$ Let, $C = (~C_0-\{a b\}~) \cup (~C_1-\{a' b'\}~) \cup \{a a', b b'\}.$ Then $C$ is a hamiltonian cycle of $AQ_n.$ \\

Let $H = AQ_n - E(C).$ Then, $H = (AQ_{n-1}^0 -E(C_0)) \cup (AQ_{n-1}^1 -E(C_1)) \cup \{a b, a' b'\} \cup (E_n^h -\{ a a', b b'\}) \cup E_n^c.$ Obviously, $H$ is a spanning, $(2n - 3)$-regular subgraph of $AQ_n$. \\

We prove that $H$ is $(2n-3)$-connected. Let $S \subset V(AQ_n)$ with $|S| \leq 2n - 4.$ Write $S = S_0 \cup S_1$ with $S_0 = V(AQ^0_{n-1}) \cap S$ and $S_1 = V(AQ^1_{n-1}) \cap S.$ It is suffices to prove that $H - S$ is connected. Suppose $ S = S_0.$ As each component of $AQ_{n-1}^0 -E(C_0) - S$ is joined to $AQ_{n-1}^1 -E(C_1)$ by an edge of $E_n^h$ or $E_n^c, H - S$ is connected. Suppose $S_0 \neq \emptyset$ and $S_1 \neq \emptyset.$ If $|S_0| <2n - 5$ and $|S_1| <2n - 5,$ then  $AQ_{n-1}^i - E(C_i) - S_i,$ for $i = 0, 1$  are connected and are joined to each other by an edge. Thus $H - S$ is connected. If $|S_0| = 2n - 5$ and $|S_1| = 1.$ Then $\left(AQ_{n-1}^1 - E(C_1)\right) - S_1$ is connected. As each component of $\left(AQ_{n-1}^0 - E(C_0)\right)- S_0$ is connected to $\left(AQ_{n-1}^1 - E(C_1)\right) - S_1$ by at least one edge from $E_n^c, H - S$ is connected. Hence $H$ is $(2n - 3)$-connected.\\

Now we construct a spanning ladder-like subgraph of $H.$ By induction, there is a ladder-like subgraph $L$ in $AQ_{n-1}^0 - E(C_0)$ and corresponding subgraph $L'$ in $AQ_{n-1}^1 - E(C_1).$ By Lemma \ref{l3}(i), using $L$ and $L',$ we get a spanning ladder-like subgraph in $H.$
\end{proof}

 The case $n_1 = 2$ of Theorem \ref{2} follows from above proposition and Lemma \ref{l2}.

\section{\textbf{Case $n_1 = 3$}}

In this section, we prove Theorem \ref{2} for the special case $ n_1  = 3.$ First we prove the result for $n = 4.$ 

Recall from Definition \ref{d1} that a  ladder-like subgraph  of $AQ_n$ on $2m \geq 12$ vertices $ u_1, u_2, \dots, u_m$ and $v_1, v_2, \dots, v_m$ contains the edges  $ u_iv_i$ for $ i = 1, 2, \dots, m$ and  the two edges $u_1v_4,  u_4v_1.$ 
\begin{lem}
	The augmented cube $AQ_4$ contains a spanning ladder-like subgraph $L_1$ such that
	
	(i)  $ H = L_1 - \{ u_1v_1, u_4v_4 \}$ is 3-regular and 3-connected; 
	
	(ii)  $AQ_4 - E(H)$ is 4-regular, 4-connected containing a spanning ladder-like subgraph $L_2$ with a special 4-cycle, which avoids the edges $u_1v_1$ and $ u_4v_4.$
\end{lem}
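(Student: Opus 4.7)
The plan is to reuse the explicit spanning ladder-like subgraph of $AQ_4$ already exhibited in Figure~4(b) as the graph $L_1$. In that construction $m = 8$, so the two spanning $8$-cycles $Z_1, Z_2$, the matching edges $u_i v_i$ for $1 \le i \le 8$, and the crossing edges $u_1 v_4$ and $u_4 v_1$ are all specified with concrete binary labels (e.g.\ $u_1 = 0010, u_4 = 0110, v_1 = 0101, v_4 = 0001$). Since $m = 8 \ge 6$, Lemma~\ref{l2} applied to $L_1$ yields part~(i) at once: $H = L_1 - \{u_1 v_1, u_4 v_4\}$ is $3$-regular and $3$-connected.

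For part~(ii), I first note the counts: $AQ_4$ is $7$-regular and $H$ is $3$-regular, so $AQ_4 - E(H)$ is $4$-regular on $16$ vertices. It contains $56 - 24 = 32$ edges, namely the $30$ edges of $E(AQ_4) \setminus E(L_1)$ together with the two edges $u_1 v_1$ and $u_4 v_4$. To produce $L_2$ I would select two vertex-disjoint $8$-cycles $Z_1', Z_2'$ drawn from $E(AQ_4) \setminus E(L_1)$, adjoin a matching of $8$ edges between their corresponding vertices, the two extra crossing edges required by Definition~\ref{d1}, and label the cycles so that some pair of consecutive vertices $(p_t, p_{t+1})$ on $Z_1'$ and the corresponding pair $(q_t, q_{t+1})$ on $Z_2'$ are complementary in $AQ_4$, producing the required special $4$-cycle. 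Because every edge of $L_2$ is picked from $E(AQ_4) \setminus E(L_1)$, it automatically avoids $u_1 v_1$ and $u_4 v_4$. The verification is a finite edge-by-edge check on $26$ edges using the binary labels.

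The principal obstacle is the $4$-connectivity of $AQ_4 - E(H)$. My preferred approach is to exploit the recursive decomposition $AQ_4 = AQ_3^0 \cup AQ_3^1 \cup E_4^h \cup E_4^c$: show that the subgraph of $AQ_4 - E(H)$ induced on each half $V(AQ_3^j)$ is $3$-connected, and verify that the edges of $E_4^h \cup E_4^c$ surviving in $AQ_4 - E(H)$ still contain a perfect matching between the two halves. Lemma~\ref{l4} then lifts $3$-connectivity of each half to $(3+1)$-connectivity of the whole. If the specific $L_1$ chosen does not leave both halves quite $3$-connected, the fallback is a direct Menger-style check: since $L_2 \subseteq AQ_4 - E(H)$ is already $3$-connected by the corollary to Lemma~\ref{l2}, it suffices to verify that the two edges $u_1 v_1, u_4 v_4$ together with the four leftover edges of $E(AQ_4) \setminus (E(L_1) \cup E(L_2))$ destroy every $3$-vertex cut of $L_2$. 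This is a short finite verification on $16$ vertices, and I expect it to be where most of the proof's labor actually lies.
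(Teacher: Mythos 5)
Part (i) of your plan is fine: taking any explicit spanning ladder-like subgraph of $AQ_4$ and invoking Lemma \ref{l2} is exactly how the paper handles it (though the paper uses a different ladder, the one in Figure 5(b) with $u_1=1000$, $v_1=1001$, $u_4=1011$, $v_4=1010$, reserving the Figure 4(b) ladder to play the role of $L_2$). Your edge count for the complement is also correct.

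The gap is in part (ii), and it is not a small one: the entire content of this lemma is the explicit base-case verification, and your proposal defers both essential verifications instead of performing them. First, you never exhibit $L_2$. A spanning ladder-like graph on $16$ vertices has $8+8+8+2=26$ edges, while $AQ_4-E(H)$ has only $32$; so $L_2$ must use $26$ of $32$ available edges, and its existence (together with a special $4$-cycle in the sense of Definition \ref{d1}, i.e.\ a rung pair $u_t u_{t+1}$, $v_t v_{t+1}$ with $t\ge 5$ realized by complementary edges) is precisely the hard, non-obvious claim. Saying you ``would select two vertex-disjoint $8$-cycles \dots and label the cycles so that'' the complementarity condition holds is a restatement of the goal, not a proof; with your choice of $L_1$ it is not even clear a priori that such an $L_2$ exists, and the paper avoids this by exhibiting $L_1$ and $L_2$ concretely as edge-disjoint subgraphs in Figure 5. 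Second, the $4$-connectivity is likewise left to one of two unexecuted strategies. Note that the paper's argument does \emph{not} split along $V(AQ_3^0)\cup V(AQ_3^1)$ as your first approach proposes: it partitions $V(AQ_4)$ into the eight $B_8$-vertices $\{0010, 1101, 1110, 0110, 0001, 1001, 1010, 0101\}$ and their complement, checks that $AQ_4-E(H)$ induces a $3$-regular $3$-connected graph $W_1\cong W_2$ on each part joined by a perfect matching, and applies Lemma \ref{l4}. Your halved-cube version may or may not leave each induced half $3$-connected; that has to be checked for your specific $H$, and your ``fallback'' Menger check is also not carried out. Until a concrete $L_2$ is written down and one of the connectivity verifications is actually done, the proof is incomplete.
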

\begin{proof}
 A decomposition of  $AQ_4$ into a spanning subgraph $H$ and its complement, where the edges of $H$ are denoted by dashed lines and the edges of the complement graph by solid lines, is shown in Figure 5(a). These two graphs are redrawn separately in Figure 5(b) and 5(c).	Let $u_1 = 1000,\,\, v_1 = 1001,\,\, u_4 = 1011,\,\, v_4 = 1010,$ and let $L_1 =  H \cup \{u_1v_1, u_4 v_4\}.$ Then, from Figure 5(b), it is clear that $L_1$ is a ladder-like spanning subgraph of $AQ_4.$ Clearly, $H$ is a 3-regular spanning  subgraph of $AQ_4.$ Also, by Lemma \ref{l2}, $H$ is 3-connected. This proves (i).
	\begin{center}
		\vspace*{-0.5cm}
		\begin{tikzpicture}[scale=0.7][h]
	\hspace*{-1.2cm}
	\draw [fill=black] (2.5,9) circle  (.08)  node [left]  at (2.5,9.3) {$\textbf{0110}$};
	\draw [fill=black] (5.5,9) circle  (.08)  node [left]  at (5.5,9.3) {$\textbf{0111}$};
	\draw [fill=black] (7.5,9) circle  (.08)  node [left]  at (7.5,9.3) {$\textbf{1110}$};
	\draw [fill=black] (10.5,9) circle (.08)  node [left]  at (10.5,9.3) {$\textbf{1111}$};
	
	\draw [fill=black] (2.5,12) circle  (.08)  node [left]  at (2.5,12.3) {$\textbf{0100}$};
	\draw [fill=black] (5.5,12) circle  (.08)  node [left]  at (5.5,12.3) {$\textbf{0101}$};
	\draw [fill=black] (7.5,12) circle  (.08)  node [left]  at (7.5,12.3) {$\textbf{1100}$};
	\draw [fill=black] (10.5,12) circle  (.08)  node [left]  at (10.5,12.3) {$\textbf{1101}$};
	
	\draw [fill=black] (2.5,14) circle  (.08)  node [left]  at (2.5,14.3) {$\textbf{0010}$};
	\draw [fill=black] (5.5,14) circle  (.08)  node [left]  at (5.5,14.3) {$\textbf{0011}$};
	\draw [fill=black] (7.5,14) circle  (.08)  node [left]  at (7.5,14.3) {$\textbf{1010}$};
	\draw [fill=black] (10.5,14) circle  (.08)  node [left]  at (10.5,14.3) {$\textbf{1011}$};
	
	\draw [fill=black] (2.5,17) circle  (.08)  node [left]  at (2.5,17.3) {$\textbf{0000}$};
	\draw [fill=black] (5.5,17) circle  (.08)  node [left]  at (5.5,17.3) {$\textbf{0001}$};
	\draw [fill=black] (7.5,17) circle  (.08)  node [left]  at (7.5,17.3) {$\textbf{1000}$};
	\draw [fill=black] (10.5,17) circle  (.08)  node [left]  at (10.5,17.3) {$\textbf{1001}$};
	
	\draw [style= dashed] ( 2.5,9) -- ( 5.5,9 );
	\draw [style= thin] ( 2.5,9) -- ( 2.5,12 );
	\draw [style= thin] ( 2.5,9) -- ( 5.5,12 );
	\draw [style= dashed] ( 2.5,9)..controls(1.7, 11.5)..( 2.5,14 );
	\draw [style= thin] ( 2.5,9) -- ( 5.5,17 );
	\draw [style= thin] ( 2.5,9) ..controls(4.5, 8.3).. ( 7.5,9 );
	\draw [style= dashed] ( 2.5,9) to [out=180,in=-150] (1,18) to [out=30,in=160] ( 10.5,17 );
	\draw [style= thin] ( 2.5,12 ) -- ( 5.5,9 );
	\draw [style= dashed] ( 2.5,12 ) -- ( 5.5,12 );
	\draw [style= thin] ( 2.5,12 ) -- ( 5.5,14 );
	\draw [style= dashed] ( 2.5,12 ) ..controls(1.7, 14.5)..( 2.5,17 );
	\draw [style= thin] ( 2.5,12 ) -- ( 10.5,14 );
	\draw [style= dashed] ( 2.5,12 ) ..controls(4.5, 13).. ( 7.5,12 );
	\draw [style= dashed] ( 2.5,14 ) -- ( 5.5,14 );
	\draw [style= thin] ( 2.5,14 ) -- ( 2.5,17 );
	\draw [style= thin] ( 2.5,14 ) -- ( 5.5,17 );
	\draw [style= dashed] ( 2.5,14 ) ..controls(4.5, 15).. ( 7.5,14 );
	\draw [style= thin] ( 2.5,14 ) -- ( 10.5,12 );
	\draw [style= thin] ( 2.5,14 ) -- ( 5.5,12 );
	\draw [style= thin] ( 2.5,17 ) -- ( 5.5,14 );
	\draw [style= thin] ( 2.5,17 ) -- ( 5.5,17 );
	\draw [style= thin] ( 2.5,17 ) -- ( 5.5,9 );
	\draw [style= thin] ( 2.5,17 ) ..controls(4.5, 18).. ( 7.5,17 );
	\draw [style= dashed] ( 2.5,17 ) to [out=-160,in=140] (1,8.5) to [out=-30,in=-160] ( 10.5,9);
	\draw [style= thin] ( 5.5,9 ) -- ( 5.5,12 );
	\draw [style= dashed] ( 5.5,9 ) -- ( 7.5,17 );
	\draw [style= dashed] ( 5.5,9 ) ..controls(6, 12).. ( 5.5,14 );
	\draw [style= thin] ( 5.5,9 ) ..controls(7.5, 8.3).. ( 10.5,9);
	\draw [style= thin] ( 5.5,12 ) -- ( 7.5,14 );
	\draw [style= dashed] ( 5.5,12 ) ..controls(7.5, 13).. ( 10.5,12 );
	\draw [style= dashed] ( 5.5,12 ) ..controls(6, 14.5).. ( 5.5,17 );
	\draw [style= thin] ( 5.5,14 ) -- ( 5.5,17 );
	\draw [style= dashed] ( 5.5,14 ) ..controls(7.5, 15).. ( 10.5,14 );
	\draw [style= thin] ( 5.5,14 ) -- ( 7.5,12 );
	\draw [style= thin] ( 5.5,17 ) ..controls(7.5, 18).. ( 10.5,17 );
	\draw [style= dashed] ( 5.5,17 ) -- ( 7.5,9 );
	\draw [style= dashed] ( 7.5,9 ) -- ( 10.5,9);
	\draw [style= thin] ( 7.5,9 ) -- ( 10.5,12 );
	\draw [style= thin] ( 7.5,9 ) -- ( 7.5,12 );
	\draw [style= thin] ( 7.5,9 ) -- ( 10.5,17 );
	\draw [style= dashed] ( 7.5,9 ) ..controls(7, 12.5).. ( 7.5,14 );
	\draw [style= thin] ( 7.5,12 ) -- ( 10.5,9);
	\draw [style= thin] ( 7.5,12 ) -- ( 10.5,14 );
	\draw [style= dashed] ( 7.5,12 ) -- ( 10.5,12 );
	\draw [style= dashed] ( 7.5,12 ) ..controls(6.5, 15).. ( 7.5,17 );
	\draw [style= dashed] ( 7.5,14 ) -- ( 7.5,17 );
	\draw [style= thin] ( 7.5,14 ) -- ( 10.5,14 );
	\draw [style= thin] ( 7.5,14 ) -- ( 10.5,17 );
	\draw [style= thin] ( 7.5,14 ) -- ( 10.5,12 );
	\draw [style= thin] ( 7.5,17 ) -- ( 10.5,17 );
	\draw [style= thin] ( 7.5,17 ) -- ( 10.5,14 );
	\draw [style= thin] ( 7.5,17 ) -- ( 10.5,9);
	\draw [style= thin] ( 10.5,9) -- ( 10.5,12 );
	\draw [style= dashed] ( 10.5,9) ..controls(11, 12).. ( 10.5,14 );
	\draw [style= dashed] ( 10.5,12 ) ..controls(11, 14.5).. ( 10.5,17 );
	\draw [style= dashed] ( 10.5,14 ) -- ( 10.5,17 );
	\end{tikzpicture}	
	
	${ (a). ~ AQ_4 = H \cup (AQ_4 - E(H))}$ \\

\end{center}

\begin{center}	
	\begin{tikzpicture}[scale=0.8]
	

	\draw [fill=black] (2,0) circle  (.08)  node [left]  at (2,0) {$1100$};
	\draw [fill=black] (2,1) circle  (.08)  node [right]  at (2,0.8) {$0100$};
	\draw [fill=black] (2,2) circle  (.08)  node [right]  at (2,1.8) {$0000$};
	\draw [fill=black] (2,3) circle  (.08)  node [left]  at (2,3) {$1111$};
	\draw [fill=black] (2,4) circle  (.08)  node [left]  at (2,4) {$1011$};
	\draw [fill=black] (2,5) circle  (.08)  node [left]  at (2,5) {$0011$};
	\draw [fill=black] (2,6) circle  (.08)  node [right]  at (2.05,5.8) {$0111$};
	\draw [fill=black] (2,7) circle  (.08)  node [left]  at (2,7) {$1000$};
	
	\draw [fill=black] (5,0) circle  (.08)  node [right]  at (5,0) {$1101$};
	\draw [fill=black] (5,1) circle  (.08)  node [left]  at (5,0.8) {$0101$};
	\draw [fill=black] (5,2) circle  (.08)  node [left]  at (5,1.8) {$0001$};
	\draw [fill=black] (5,3) circle  (.08)  node [right]  at (5,3) {$1110$};
	\draw [fill=black] (5,4) circle  (.08)  node [right]  at (5,4) {$1010$};
	\draw [fill=black] (5,5) circle  (.08)  node [right]  at (5,5) {$0010$};
	\draw [fill=black] (5,6) circle  (.08)  node [left]  at (5,5.8) {$0110$};
	\draw [fill=black] (5,7) circle  (.08)  node [right]  at (5,7) {$1001$};

	\draw [style= dashed] (2,0)--(2,1);
	\draw [style= dashed] (5,0)--(5,1);
	\draw [style= dashed] (2,1)--(2,2);
	\draw [style= dashed] (5,1)--(5,2);
	\draw [style= dashed] (2,2)--(2,7);
	\draw [style= dashed] (5,2)--(5,7);
	\draw [style= dashed] (2,0)--(5,0);
	\draw [style= dashed] (2,1)--(5,1);
	\draw [style= dashed] (2,2)--(5,2);
	\draw [style= dashed] (2,3)--(5,3);
	
	\draw [style= dashed] (2,5)--(5,5);
	\draw [style= dashed] (2,6)--(5,6);

	\draw [style= dashed] (2,7)--(5,4);
	\draw [style= dashed] (2,4)--(5,7);

	\draw [style= dashed] (2,0)..controls(0.5, 4)..(2,7);
	\draw [style= dashed] (5,0)..controls(6.5, 4)..(5,7);
	
	
	\draw [style= thin, color = gray] (8,0)--(11,3);
	\draw [style= thin, color = gray] (8,3)--(11,0);
	\draw [style= thin, color = gray] (8,2)--(11,6);
	\draw [style= thin, color = gray] (8,1)--(11,5);
	\draw [style= thin, color = gray] (8,5)--(11,2);
	\draw [style= thin, color = gray] (8,6)--(11,1);
	
	\draw [fill=black] (8,0) circle  (.08)  node [left]  at (8,0) {$0000$};
	\draw [fill=black] (8,1) circle  (.08)  node [left]  at (7.9,1) {$1000$};
	\draw [fill=black] (8,2) circle  (.08)  node [left]  at (7.9,2) {$1011$};
	\draw [fill=black] (8,3) circle  (.08)  node [left]  at (8,3) {$0100$};
	\draw [fill=black] (8,4) circle  (.08)  node [left]  at (8,4) {$0110$};
	\draw [fill=black] (8,5) circle  (.08)  node [left]  at (8,5) {$1110$};
	\draw [fill=black] (8,6) circle  (.08)  node [left]  at (7.9,6) {$1101$};
	\draw [fill=black] (8,7) circle  (.08)  node [left]  at (8,7) {$0010$};
	
	\draw [fill=black] (11,0) circle  (.08)  node [right]  at (11,0) {$0111$};
	\draw [fill=black] (11,1) circle  (.08)  node [right]  at (11.1,1) {$1111$};
	\draw [fill=black] (11,2) circle  (.08)  node [right]  at (11.1,2) {$1100$};
	\draw [fill=black] (11,3) circle  (.08)  node [right]  at (11,3) {$0011$};
	\draw [fill=black] (11,4) circle  (.08)  node [right]  at (11,4) {$0001$};
	\draw [fill=black] (11,5) circle  (.08)  node [right]  at (11,5) {$1001$};
	\draw [fill=black] (11,6) circle  (.08)  node [right]  at (11.1,6) {$1010$};
	\draw [fill=black] (11,7) circle  (.08)  node [right]  at (11,7) {$0101$};
	
	\draw [style= thick] (8,0)--(8,1);      
	\draw [style= thick] (11,0)--(11,1);
	\draw [style= thick] (8,1)--(8,2);
	\draw [style= thick] (11,1)--(11,2);
	\draw [style= thick] (8,2)--(8,7);
	\draw [style= thick] (11,2)--(11,7);
	\draw [style= thick] (8,0)--(11,0);
	\draw [style= thick] (8,1)--(11,1);
	\draw [style= thick] (8,2)--(11,2);
	\draw [style= thick] (8,3)--(11,3);
	\draw [style= thick] (8,4)--(11,4);
	\draw [style= thick] (8,5)--(11,5);
	\draw [style= thick] (8,6)--(11,6);
	\draw [style= thick] (8,7)--(11,7);
	
	\draw [style= thick] (8,0)..controls(6.5, 4)..(8,7);
	\draw [style= thick] (11,0)..controls(12.5, 4)..(11,7);
	
	\draw [style= thick] (8,7)--(11,4);
	\draw [style= thick] (8,4)--(11,7);

	\end{tikzpicture}
	
	\hspace{0.5in} {(b). $H$} \hspace{1in}  {(c). $AQ_4 - E(H)$}\\
	
	\hspace{0.1in} {Figure 5: Decomposition of $AQ_4$ for $n_1 = 3$}\\
	
\end{center}
  We now prove (ii).  Obviously, $AQ_4 - E(H)$ is a 4-regular spanning subgraph of $AQ_4.$  We prove that $AQ_4 - E(H)$ is 4-connected. Let $W_1$ be a subgraph of $AQ_4$ induced by the eight vertices $0010,\,\, 1101,\,\, 1110,\,\, 0110,\,\, 0001,\,\, 1001,\,\, 1010,\,\, 0101$ and let $W_2$ be a subgraph of $AQ_4$ induced by the remaining vertices. Then $W_1$ is 3-regular and 3-connected and is isomorphic to $W_2.$  Observe that $AQ_4 - E(H)$ is union of  $W_1$ and $W_2$ and a perfect matching between them. Therefore, by Lemma \ref{l4}, $AQ_4 - E(H)$ is $4$-connected. 
  
  Let $L_2$ be the spanning subgraph of $AQ_4 - E(H)$ consisting of edges denoted by dark lines in  Figure 5(c). Then $L_2$ is a ladder-like  subgraph of $AQ_4 - E(H)$ containing a special 4-cycle $<0100,\, 1011,\, 1100,\, 0011,\, 0100>.$ Further, $L_2$ does not contain the edges $u_1 v_1$ and $u_4 v_4.$ This completes the proof.\end{proof}

We extend the above result for the general $n.$   
\begin{proposition}\label{p2} 		
	For $ n \geq 4,$ the augmented cube $AQ_n$ contains a spanning ladder-like subgraph $L_1$ satisfying the following properties. 
	\begin{enumerate} 
		\item $ H = L_1 -\{~u_1v_1~,~ u_4v_4~\}$ is 3-regular and 3-connected; and 
		\item $ AQ_n - E(H) $ is $(2n-4)$-regular and $(2n - 4)$-connected containing a spanning ladder-like subgraph $L_2$ with a special 4-cycle, which avoids the edges $u_1 v_1$ and $u_4 v_4.$
	\end{enumerate} 
\end{proposition}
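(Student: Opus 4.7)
The plan is induction on $n \geq 4$, the base case $n = 4$ being the previous lemma. For the inductive step, write $AQ_n = AQ^0_{n-1} \cup AQ^1_{n-1} \cup E^h_n \cup E^c_n$. By the induction hypothesis applied to $AQ^0_{n-1}$, there is a spanning ladder-like subgraph $l_1$ of $AQ^0_{n-1}$ such that $h := l_1 - \{u_1v_1, u_4v_4\}$ is $3$-regular and $3$-connected, and $AQ^0_{n-1} - E(h)$ contains a spanning ladder-like subgraph $l_2$ with a special $4$-cycle that avoids both $u_1v_1$ and $u_4v_4$. Denote by $l'_1, l'_2$ the corresponding subgraphs of $AQ^1_{n-1}$ and by $h'$ the analogue of $h$ there.

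Next I would apply Lemma \ref{l3}(i) to glue $l_1$ and $l'_1$ together with a four-edge set $F_1 \subseteq E^h_n$ into a spanning ladder-like subgraph $L_1$ of $AQ_n$, and Lemma \ref{l3}(ii) to glue $l_2$ and $l'_2$ with $F_2 \subseteq E^c_n$ into a spanning ladder-like subgraph $L_2$ of $AQ_n$ with a special $4$-cycle. Define $H = L_1 - \{u_1v_1, u_4v_4\}$, where the labels refer to the relabeled vertices of $L_1$. Lemma \ref{l2} immediately gives that $H$ is $3$-regular and $3$-connected, and since $AQ_n$ is $(2n-1)$-regular, $AQ_n - E(H)$ is $(2n-4)$-regular.

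To place $L_2$ inside $AQ_n - E(H)$ and verify that $L_2$ avoids the two distinguished edges of $L_1$, I would check $E(L_1) \cap E(L_2) = \emptyset$. By induction $l_2 \subseteq AQ^0_{n-1} - E(h)$, whence $E(l_1) \cap E(l_2) \subseteq \{u_1v_1, u_4v_4\}$; since $l_2$ misses those two edges the intersection is empty, and the same argument works in $AQ^1_{n-1}$. Finally $F_1 \subseteq E^h_n$ is disjoint from $F_2 \subseteq E^c_n$. Hence $L_1$ and $L_2$ share no edges, so $L_2 \subseteq AQ_n - E(L_1) \subseteq AQ_n - E(H)$ and in particular avoids $u_1v_1$ and $u_4v_4$.

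The main obstacle is proving that $G := AQ_n - E(H)$ is $(2n-4)$-connected. Take any $S \subseteq V(AQ_n)$ with $|S| \leq 2n-5$, split $S = S_0 \cup S_1$ along the bipartition $V(AQ^0_{n-1}) \cup V(AQ^1_{n-1})$, and set $G^i := G \cap AQ^i_{n-1}$. A direct computation of which edges survive shows that $G^i$ contains $AQ^i_{n-1} - E(h_i)$ as a spanning subgraph, which is $(2n-6)$-connected by induction, and that the full complementary matching $E^c_n$ persists in $G$. If $|S_0|, |S_1| \leq 2n-7$, then each $G^i - S_i$ is connected and at least one $c$-edge between them survives because $|S| \leq 2n-5 < 2^{n-1}$; if one of $|S_0|, |S_1|$ exceeds $2n-7$, the other is at most $1$, so the unaffected side stays connected and every vertex on the attacked side retains its $c$-partner, except for at most one. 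The delicate boundary case is $|S_0| = 2n-6, |S_1| = 1$ (and its mirror): the possibly disconnected $G^0 - S_0$ may have a singleton component whose $c$-partner lies in $S_1$, but that vertex's $h$-partner remains available because only four $h$-edges are removed when forming $L_1$; tracking these few forbidden cross edges closes this case and completes the induction.
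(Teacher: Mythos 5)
Your overall strategy coincides with the paper's (induction, Lemma \ref{l3} to build $L_1$ and $L_2$, a case analysis on $|S_0|,|S_1|$ for the connectivity of $AQ_n-E(H)$), and your verification that $E(L_1)\cap E(L_2)=\emptyset$ is fine. But there is a genuine gap in the connectivity argument, at the step ``$G^i$ contains $AQ^i_{n-1}-E(h_i)$ as a spanning subgraph, which is $(2n-6)$-connected by induction.'' This is true for $i=0$ but false for $i=1$. The construction of $L_1$ in Lemma \ref{l3}(i) is asymmetric: on the $AQ^1_{n-1}$ side it deletes the two \emph{diagonal} edges $u'_1v'_4$ and $u'_4v'_1$ from $l'_1$, while keeping the rungs $u'_1v'_1$ and $u'_4v'_4$; and $H=L_1-\{u_1v_1,u_4v_4\}$ only removes rungs on the $AQ^0_{n-1}$ side. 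Hence $u'_1v'_1,u'_4v'_4\in E(H)$, so $G^1=G\cap AQ^1_{n-1}$ is missing exactly these two rungs, whereas $AQ^1_{n-1}-E(h')$ (with $h'=l'_1-\{u'_1v'_1,u'_4v'_4\}$, the graph to which the induction hypothesis applies) contains them. So $G^1$ is \emph{not} a supergraph of $AQ^1_{n-1}-E(h')$, and its $(2n-6)$-connectivity does not follow from the induction hypothesis. This is precisely the point the paper has to confront: it sets $H_1=l'_1-\{u'_1v'_4,u'_4v'_1\}$, observes that $H_1\cong C\times K_2$ is $3$-regular and $3$-connected, and argues separately that $AQ^1_{n-1}-E(H_1)$ is still $(2n-6)$-connected. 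Your proof needs some substitute for that step.

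A smaller issue: in the boundary case $|S_0|=2n-6$, $|S_1|=1$, you note that a problematic vertex's $h$-partner ``remains available because only four $h$-edges are removed,'' but the vertices whose $h$-edges are removed are exactly $u_s,u_{s+1},v_s,v_{s+1}$, and one of these could a priori be the isolated vertex in question, in which case its only cross-neighbour is its $c$-partner, which may lie in $S_1$. The paper closes this by showing such a vertex has degree at least $2n-5$ inside $K_0$, hence cannot be a singleton component of $K_0-S_0$, and that a component of size at least two always has two distinct cross-neighbours. You flag that something must be ``tracked'' here but do not supply the argument; this part is fixable along the paper's lines, whereas the $G^1$ issue above requires a new idea not present in your sketch.
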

\begin{proof}
	We proceed by induction on $n.$ By Lemma $ 4.1,$ the result holds for $n = 4.$ Suppose $n \geq 5.$ Suppose by induction $AQ_{n-1}^0$ contains  a spanning ladder-like subgraph $l_1$ such that $ H_0 = l_1 -\{~u_1v_1~,~ u_4v_4~\}$ is 3-regular and 3-connected and $ AQ_{n-1}^0 - E(H_0) $ is spanning, $(2n-6)$-regular, $(2n - 6)$-connected containing a spanning ladder-like subgraph $l_2$ with a special 4-cycle, which avoids the edges $u_1 v_1,\, u_4 v_4.$ Let $l'_i$ be the spanning ladder-like subgraph of $AQ_{n-1}^1$ corresponding to $l_i$  for $ i = 1, 2.$ Then $l_1' -\{~u'_1v'_1~,~ u'_4v'_4~\}$ is 3-regular and 3-connected. Further,  $(AQ_{n-1}^1 - E(l'_1)) \cup \{u'_1 v'_1, u'_4 v'_4\}$ is $(2n-6)$-regular, $(2n - 6)$-connected containing $l'_2$ which has a special 4-cycle. Moreover, $ l_2'$ contains none of the edges  $u'_1 v'_1$ and $ u'_4 v'_4.$  Note that $l_1'$ contains the two edges $u_1' v_4'$ and $u_4' v_1'.$ Let  $H_1 = l_1' - \{u_1' v_4',\, u_4' v_1'\}.$  Then $H_1$ is isomorphic to $C \times K_2$ for some cycle $C$ of length $2^{n-2}.$  Hence $H_1$ is a spanning 3-regular and 3-connected subgraph of $AQ_{n-1}^1.$ Therefore $AQ_{n-1}^1 - E(H_1)$ is a spanning, $(2n-6)$-regular and $(2n-6)$-connected subgraph of $AQ_{n-1}^1$ containing $l'_2.$  Define a subgraph $H$ as follows:	 
	 $$H = (H_0 - \{ u_t u_{t+1}, v_t v_{t+1}\}) \cup (H_1 - \{ u'_t u'_{t+1}, v'_t v'_{t+1}\}) \cup \{ u_t u'_t,\; v_t v'_t,\; u_{t+1} u'_{t+1},\; v_{t+1} v'_{t+1}\} ~~~{\textrm{(see~ Fig. 6.)}}$$

	 \begin{center} 
	 	\begin{tikzpicture}[scale=0.7]
	 	\draw [fill=black] (2,0) circle  (.08) node [left]  at (2,0) {$u_{m}$};
	 	\draw [fill=black] (2,1) circle  (.08) node [left]  at (1.9,1) {$u_{t+1}$}; 
	 	\draw [fill=black] (2,2) circle  (.08) node [left]  at (2.1,2) {$u_{t}$}; 
	 	\draw [fill=black] (2,3) circle  (.08) node [left]  at (2,3) {$u_{5}$}; 
	 	\draw [fill=black] (2,4) circle  (.08) node [left]  at (1.9,4) {$u_{4}$}; 
	 	\draw [fill=black] (2,5) circle  (.08) node [left]  at (2,5) {$u_3$}; 
	 	\draw [fill=black] (2,6) circle  (.08) node [left]  at (2,6) {$u_2$}; 
	 	\draw [fill=black] (2,7) circle  (.08) node [left]  at (2,7) {$u_1$}; 

	 	\draw [fill=black] (5,0) circle  (.08)  node [right]  at (5,0) {$v_{m}$};
	 	\draw [fill=black] (5,1) circle  (.08)  node [right]  at (5.2,1.1) {$v_{t+1}$};
	 	\draw [fill=black] (5,2) circle  (.08)  node [right]  at (4.9,1.8) {$v_{t}$};
	 	\draw [fill=black] (5,3) circle  (.08)  node [right]  at (4.95,2.8) {$v_5$};
	 	\draw [fill=black] (5,4) circle  (.08)  node [right]  at (5,4) {$v_{4}$};
	 	\draw [fill=black] (5,5) circle  (.08)  node [right]  at (5,5) {$v_3$};
	 	\draw [fill=black] (5,6) circle  (.08)  node [right]  at (5,6) {$v_2$};
	 	\draw [fill=black] (5,7) circle  (.08)  node [right]  at (5,7) {$v_1$};

	 	\draw [style= dotted] (2,0)--(2,1);
	 	\draw [style= dotted] (5,0)--(5,1);
	 
	 	\draw [style= dotted] (2,2)--(2,3);
	 	\draw [style= dotted] (5,2)--(5,3);
	 	\draw [style= thin ] (2,3)--(2,7);
	 	\draw [style= thin] (5,3)--(5,7);
	 
	 	\draw [style= thin] (2,0)--(5,0);
	 	\draw [style= thin] (2,1)--(5,1);
	 	\draw [style= thin] (2,2)--(5,2);
	 	\draw [style= thin] (2,3)--(5,3);
	
	 	\draw [style= thin] (2,5)--(5,5);
	 	\draw [style= thin] (2,6)--(5,6);

	 	\draw [style= thin] (2,0)..controls(1, 4)..(2,7);
	 	\draw [style= thin] (5,0)..controls(6, 4)..(5,7);
	 	
	 	\draw [style= thin] (2,7)--(5,4);
	 	\draw [style= thin] (2,4)--(5,7);

	 	
	 	\draw [fill=black] (8,0) circle  (.08) node [left]  at (8,0) {$u'_{m}$};
	 	\draw [fill=black] (8,1) circle  (.08) node [left]  at (7.9,1) {$u'_{t+1}$}; 
	 	\draw [fill=black] (8,2) circle  (.08) node [left]  at (8.2,1.6) {$u'_{t}$}; 
	 	\draw [fill=black] (8,3) circle  (.08) node [left]  at (8.05,2.9) {$u'_{5}$}; 
	 	\draw [fill=black] (8,4) circle  (.08) node [left]  at (7.85,4) {$u'_{4}$}; 
	 	\draw [fill=black] (8,5) circle  (.08) node [left]  at (8,5) {$u'_3$}; 
	 	\draw [fill=black] (8,6) circle  (.08) node [left]  at (8,6) {$u'_2$}; 
	 	\draw [fill=black] (8,7) circle  (.08) node [left]  at (8,7) {$u'_1$}; 

	 	\draw [fill=black] (11,0) circle  (.08)  node [right]  at (11,0) {$v'_{m}$};
	 	\draw [fill=black] (11,1) circle  (.08)  node [right]  at (11.1,1) {$v'_{t+1}$};
	 	\draw [fill=black] (11,2) circle  (.08)  node [right]  at (10.95,1.9) {$v'_{t}$};
	 	\draw [fill=black] (11,3) circle  (.08)  node [right]  at (11,3) {$v'_5$};
	 	\draw [fill=black] (11,4) circle  (.08)  node [right]  at (11,4) {$v'_{4}$};
	 	\draw [fill=black] (11,5) circle  (.08)  node [right]  at (11,5) {$v'_3$};
	 	\draw [fill=black] (11,6) circle  (.08)  node [right]  at (11,6) {$v'_2$};
	 	\draw [fill=black] (11,7) circle  (.08)  node [right]  at (11,7) {$v'_1$};

	 	\draw [style= dotted] (8,0)--(8,1);
	 	\draw [style= dotted] (11,0)--(11,1);
	 	\draw [style= dotted] (8,2)--(8,3);
	 	\draw [style= dotted] (11,2)--(11,3);
	 	\draw [style= thin ] (8,3)--(8,7);
	 	\draw [style= thin] (11,3)--(11,7);
	 	\draw [style= thin] (8,0)--(11,0);
	 	\draw [style= thin] (8,1)--(11,1);
	 	\draw [style= thin] (8,2)--(11,2);
	 	\draw [style= thin] (8,3)--(11,3);
	 	\draw [style= thin] (8,4)--(11,4);
	 	\draw [style= thin] (8,5)--(11,5);
	 	\draw [style= thin] (8,6)--(11,6);
	 	\draw [style= thin] (8,7)--(11,7);
	 	
	 	\draw [style= thin] (8,0)..controls(7, 4)..(8,7);
	 	\draw [style= thin] (11,0)..controls(12, 4)..(11,7);
	 	
	 	
	 	\draw [style= thin] (2,1)..controls(5.5, 0.1)..(8,1);
	 	\draw [style= thin] (2,2)..controls(5, 2.8)..(8,2);
	 	\draw [style= thin] (5,1)..controls(8.5, 0.1)..(11,1);
	 	\draw [style= thin] (5,2)..controls(8, 2.8)..(11,2);
	 	
	 	\end{tikzpicture}
	 	
	 	
	 	\hspace{0.3in} {Figure 6: Construction of $H$ in $AQ_n$ for $n_1 = 3$}
	 \end{center}

	 Clearly, $H$ is a spanning ladder-like subgraph of $AQ_n$ without the edges $u_1 v_1$ and $u_4 v_4.$ By Lemma 2.3, $H$ is 3-regular and 3-connected. 
	 
	 Hence $AQ_n - E(H)$ is a spanning $(2n - 4)$-regular subgraph of $AQ_n.$   Let $K = AQ_n - E(H).$ We prove that $K$ contains a spanning ladder-like subgraph $L_2$ with a special 4-cycle. By induction hypothesis and construction of $K,$ it is easy to see that $K$ contain  a spanning ladder-like subgraph $l_2$ of $AQ^0_{n-1}$ and $l'_2$ that of $AQ^1_{n-1}.$ By Lemma \ref{l3}(ii), we get a spanning ladder-like subgraph $L_2$ with a special 4-cycle in $K$ using $l_2$ and $l'_2.$ 
	 
	 We now prove that $K$ is $(2n - 4)$-connected.  Note that 
	 $$K = K_0 \cup K_1 \cup E_n^c \cup (E^h_n - \{ u_t u'_t,\; v_t v'_t,\; u_{t+1} u'_{t+1},\; v_{t+1} v'_{t+1}\}),$$ 
	 where $K_0 = (AQ_{n-1}^0 - E(H_0)) \cup \{ u_t u_{t+1}, v_t v_{t+1}\}$ and
	 	 $K_1 = (AQ_{n-1}^1 - E(H_1)) \cup \{ u'_t u'_{t+1}, v'_t v'_{t+1}\}.$

	 Observe that there are $2^n - 4$ edges of $K$ between $K_0$ and $K_1.$  Since $ AQ_{n-1}^i - E(H_i)$ is $(2n - 6)$-connected for $ i = 0, 1,$ both $K_0$ and $K_1$ are $(2n - 6)$-connected.   Let $S$ be a subset of $V(K)$ with $|S| = 2n - 5.$ It suffices to prove that $K - S$ is connected. Suppose $ S \subset V(K_0).$  Observe that every component of $(K_0 - S)$ is joined by an edge of $E_n^h$ or $E_n^c$ to the connected graph $K_1.$ Hence $K - S$ is connected. Similarly, $K_S$ is connected if $ S \subset V(K_1). $  Suppose $ S$ intersects both $V(K_0) $ and $V(K_1). $ Then $S = S_0 \cup S_1,$ where $ S_0  = S \cap V(K_0),  $ $ S_1 = S \cap V(K_1).$   We may assume that $ |S_0| \geq |S_1|.$   Suppose $|S_0| < 2n - 6.$  Then $|S_1| < 2n - 6$ and therefore both $(K_0 - S_0)$ and $(K_1 - S_1)$ are connected and are joined to each other by an edge belonging to $E_n^c.$ Hence $(K - S)$ is connected.  Suppose $|S_0| = 2n - 6.$  Then $|S_1| = 1.$  Hence  $(K_1 - S_1)$ is connected. Let $D$ be a component of $ K_0 - S_0.$ Let $ v$ be a vertex of $D.$ If $ v \notin \{u_t, u_{t+1}, v_t ,v_{t+1} \},$ then $ v$ has at least two neighbours in $K_1.$  Suppose $ v \in \{u_t, u_{t+1}, v_t ,v_{t+1} \}.$ Then the degree of $v$ in $K_0$ is at least $2n -5$ and therefore it has at least one neighbour in $ K_0 - S_0,$ say $w.$ Then $ u$ and $w$ together have at least two neighbours in $K_2.$ Thus, in any case, the component $D$ has at least one neighbour in $K_1 - S_1.$  This implies that $ K- S$ has only one component and so it is connected.  
\end{proof}

 The case $n_1 = 3$ of Theorem \ref{2} follows from  Proposition \ref{p2} as the graph $H$ and its complement are 4-pancyclic by Lemma \ref{l2}.
%

\section{\textbf{Case $n_1 = 4$}}
In this section, we prove Theorem \ref{2} for the case $ n_1 = 4.$ To prove this we require the following lemma.

\begin{lem} [\cite{ma}] \label{l7}
	Any two vertices in $AQ_n$ have at most four common neighbours for $n \geq 3.$
\end{lem}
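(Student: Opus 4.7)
My plan is to encode adjacency in $AQ_n$ via an operator set. Every vertex $x \in V(AQ_n)$ has exactly $2n-1$ neighbours, namely $x \oplus s$ where $s$ ranges over
\[
S := \{e_1, \ldots, e_n\} \cup \{f_2, \ldots, f_n\},
\]
with $e_i \in \{0,1\}^n$ the standard basis vector at coordinate $i$ (a hypercube edge) and $f_k \in \{0,1\}^n$ the indicator of the last $k$ coordinates (a complementary edge). Using vertex-transitivity, I would assume $u = 0^n$; then a vertex $w$ is a common neighbour of $u$ and $v$ iff $w \in S$ and $w \oplus d \in S$, where $d := u \oplus v \neq 0$. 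Hence the number of common neighbours equals $|S \cap (S \oplus d)|$, which is twice the number of unordered pairs $\{\sigma, \tau\} \subseteq S$ with $\sigma \oplus \tau = d$, and it suffices to show this pair count is at most $2$.

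I would classify each such pair by the types of its elements: (I) $\{e_i, e_j\}$, (II) $\{e_i, f_k\}$, or (III) $\{f_k, f_l\}$. A Type (I) pair exists only when $d$ has Hamming weight $2$, and is then unique. For Type (III), $f_k \oplus f_l$ with $k < l$ is the indicator of the middle block $\{n-l+1, \ldots, n-k\}$; its right endpoint recovers $k$ and its length recovers $l$, so at most one pair of this type exists. For Type (II), a pair $\{e_i, f_k\}$ with XOR $d$ corresponds to $d$ being at Hamming distance $1$ from $f_k$; if three such indices $k_1 < k_2 < k_3$ existed, each XOR $f_{k_i} \oplus f_{k_j}$ would have to equal $e_a \oplus e_b$ for some $a,b$, and since $e_a \oplus e_b$ has weight $0$ or $2$ while $f_{k_i} \oplus f_{k_j}$ has weight $|k_i-k_j|\ne 0$, we would be forced to have $|k_i-k_j|=2$ for every pair, impossible for three distinct indices. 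So Type (II) contributes at most $2$ pairs.

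The naive bound $1+2+1=4$ on the pair count is too weak, so the heart of the proof is showing that the three maxima cannot be attained simultaneously, which I expect to be the main obstacle. I would run a short case analysis: first, if Type (II) attains $2$, a direct characterisation shows $d$ is either $f_k$ itself (for $3 \le k \le n-1$) or a suffix with a single interior gap, both of which have weight at least $3$ and are either non-contiguous or terminate at position $n$, so Types (I) and (III) both vanish; second, if Type (III) attains $1$, then $d$ is a contiguous middle block with support strictly to the left of position $n$, and a short Hamming-distance computation shows $|d \oplus f_l|\ne 1$ for every $l$, killing Type (II); finally, when Types (I) and (III) both attain $1$, $d$ consists of two adjacent bits avoiding the last coordinate, and none of these patterns lies at Hamming distance $1$ from any $f_l$, again killing Type (II). In every configuration the pair count is at most $2$, yielding at most four common neighbours. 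The delicate part of this plan is the bookkeeping in the borderline cases near the rightmost coordinates (where $f_2$ interacts with short singletons), but it reduces to a handful of explicit weight checks rather than any new idea.
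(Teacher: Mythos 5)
The paper does not actually prove this lemma; it is imported verbatim from Ma--Liu--Xu \cite{ma} (where it is established alongside the super-connectivity of $AQ_n$), so there is no in-paper argument to compare against. Your self-contained proof via the Cayley-graph structure of $AQ_n$ is sound: the connection set $S=\{e_1,\dots,e_n\}\cup\{f_2,\dots,f_n\}$ is the correct description of adjacency (it follows by induction on the recursive definition, with $S_n=\{e_1,f_n\}\cup S_{n-1}$), the reduction of the common-neighbour count to $|S\cap(S\oplus d)|=2\cdot\#\{\{\sigma,\tau\}\subseteq S:\sigma\oplus\tau=d\}$ is right, and the per-type bounds $1,2,1$ together with your two exclusion facts ((II)$=2\Rightarrow$(I)$=$(III)$=0$, and (III)$\geq 1\Rightarrow$(II)$=0$) already force the pair count to be at most $2$ in every case, so your third sub-case is logically redundant. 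The one place that needs tightening is the phrase ``support strictly to the left of position $n$'' in the Type (III) analysis: a contiguous block whose right endpoint is $n-1$ \emph{is} at Hamming distance $1$ from some $f_l$ (e.g.\ $d=0110$ and $f_3=0111$ in $AQ_4$), so the Hamming-distance computation only goes through because a genuine Type (III) difference $f_k\oplus f_l$ has right endpoint $n-k\leq n-2$; with that sharper bound the suffix $\{q+1,\dots,n\}$ contributes at least two coordinates to $d\oplus f_m$ and the claim holds. This is exactly the borderline bookkeeping you flagged, and once stated with $n-2$ rather than $n-1$ the plan yields a complete, elementary proof, which is arguably a nice bonus over the paper's bare citation.
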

\begin{proposition}
For $n \geq 5,$ there exists a spanning, 4-regular, 4-connected subgraph $H$ of $AQ_{n}$ such that $AQ_n - E(H)$ is spanning, $(2n - 5)$-regular, $(2n - 5)$-connected. Moreover, $H$ contains a spanning ladder-like subgraph and $AQ_n - E(H)$ contains a spanning ladder-like subgraph with a special 4-cycle.
\end{proposition}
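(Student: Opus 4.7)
The proof proceeds by induction on $n$, with base case $n=5$ handled by an explicit construction and the inductive step following the scheme of Proposition~\ref{p2}.

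For the base case $n=5$, one exhibits a spanning 4-regular subgraph $H$ of $AQ_5$ directly (for instance by listing its edges) and verifies by inspection that $H$ is 4-regular, 4-connected, contains a spanning ladder-like subgraph, and that $AQ_5-E(H)$ is 5-regular, 5-connected and contains a spanning ladder-like subgraph with a special 4-cycle. As in Lemma~\ref{3.1}, the 4-connectivity of $H$ can be certified by realising $H$ as two vertex-disjoint 3-connected pieces joined by a perfect matching and invoking Lemma~\ref{l4}.

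For the inductive step, let $n\geq 6$ and assume the proposition for $n-1$. Write $AQ_n = AQ_{n-1}^0 \cup AQ_{n-1}^1 \cup E_n^h \cup E_n^c$, and let $H_0 \subset AQ_{n-1}^0$ be the 4-regular, 4-connected subgraph given by the hypothesis, with spanning ladder-like subgraph $l_1$ (using the notation $u_i, v_i, Z_1, Z_2$ from Definition~\ref{d1}, with $m=2^{n-2}$). Let $K_0 = AQ_{n-1}^0-E(H_0)$, which is $(2n-7)$-regular, $(2n-7)$-connected, and contains a spanning ladder-like subgraph $l_2$ with a special $4$-cycle. Let $H_1, l_1', K_1, l_2'$ denote the corresponding objects in $AQ_{n-1}^1$. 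Pick $s \in \{5,\dots,m-1\}$ and set
\[
H = \bigl(H_0 - \{u_s u_{s+1},\, v_s v_{s+1}\}\bigr) \,\cup\, \bigl(H_1 - \{u'_s u'_{s+1},\, v'_s v'_{s+1}\}\bigr) \,\cup\, F_h,
\]
where $F_h = \{u_s u'_s,\, v_s v'_s,\, u_{s+1} u'_{s+1},\, v_{s+1} v'_{s+1}\} \subseteq E_n^h$. Each of the eight affected vertices loses one edge inside its side and gains one cross edge, so $H$ is 4-regular. Applying Lemma~\ref{l3}(i) to $l_1$ and $l_1'$ yields a spanning ladder-like subgraph whose edges all lie in $H$; applying Lemma~\ref{l3}(ii) to $l_2$ and $l_2'$ (with four edges of $E_n^c \subseteq AQ_n - E(H)$) yields a spanning ladder-like subgraph with a special $4$-cycle in $AQ_n-E(H)$.

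It remains to verify the two connectivity claims. The $(2n-5)$-connectivity of $K = AQ_n-E(H)$ is proved by the routine case analysis of the preceding propositions: $K$ retains essentially all of $E_n^h \cup E_n^c$ between the two sides, so for any hypothetical cut $S = S_0 \cup S_1$ of size at most $2n-6$, every component of $K_i - S_i$ reaches $K_{1-i}-S_{1-i}$ through some surviving cross edge, with Lemma~\ref{l7} handling the extremal subcase $|S_0|=2n-7$, $|S_1|=1$. The main obstacle is the 4-connectivity of $H$: since only the four edges of $F_h$ join the two sides in $H$, a potential $3$-cut $S = S_0 \cup S_1$ requires a finer argument. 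One uses the 4-connectivity of $H_0$ (and of $H_1$) to show that $(H_0-\{u_s u_{s+1}, v_s v_{s+1}\})-S_0$ is either connected or splits into components each of which still meets $\{u_s, u_{s+1}, v_s, v_{s+1}\}$ (and hence reaches $H_1$ through $F_h$), and symmetrically on the other side; the freedom to choose $s \in \{5,\dots,m-1\}$ is exploited to avoid degenerate configurations in which $S$ together with the two swap edges would isolate a component disjoint from $F_h$.
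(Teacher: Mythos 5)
Your overall strategy differs from the paper's: the paper does not induct on $n$ for this proposition. Instead it applies Proposition~\ref{p2} to $AQ_{n-1}^0$ to obtain the ladder-like graph $l_1$ (in which exactly the four vertices $u_1,u_4,v_1,v_4$ have degree $4$ and all others have degree $3$), takes the corresponding $l_1'$ in $AQ_{n-1}^1$, and sets $H=l_1\cup l_1'\cup F_1\cup F_2$, where $F_1\cup F_2$ consists of \emph{all} hypercube matching edges except the four at $u_1,u_4,v_1,v_4$ and their primes. This yields $2^n-4$ cross edges, so $4$-connectivity follows from a short case analysis (essentially every vertex, or a neighbour of it, carries its own cross edge). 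Your inductive construction leaves only the four edges of $F_h$ between the two halves, and this is where the proof breaks. The decisive flaw is the appeal to ``the freedom to choose $s$'': $s$ is fixed when $H$ is constructed, \emph{before} the cut set $S$ is given, so you cannot choose $s$ to dodge a configuration that depends on $S$. Without that escape hatch, your claim that every component of $(H_0-\{u_su_{s+1},v_sv_{s+1}\})-S_0$ meets $\{u_s,u_{s+1},v_s,v_{s+1}\}$ does not suffice: with $|S_0|=2$, $|S_1|=1$, such a component could meet that set in a single vertex, say $u_s$, whose unique cross edge $u_su_s'$ is killed by $u_s'\in S_1$, and nothing you have said rules this out ($H_0$ being $4$-edge-connected only forces such a component to send at least three edges into $S_0$, which two vertices of degree $4$ can absorb). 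So the $4$-connectivity of $H$ is genuinely open in your argument.

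Two further gaps: the base case $n=5$ is asserted rather than exhibited (the paper needs no such base case, since its construction is uniform for all $n\ge 5$, though it does consult the explicit $AQ_4$ decomposition once inside the connectivity argument); and the $(2n-5)$-connectivity of $AQ_n-E(H)$ is not ``routine'' --- in the paper it is the most delicate step, requiring one to track components $D$ of $G_1-S_1$ with only one or two vertices, observe that a two-vertex component $\{u,v\}$ forces $u,v$ to lie in $\{u_1,v_1,u_4,v_4\}$ with all of $S_1$ among their common neighbours, and then invoke Lemma~\ref{l7} (at most four common neighbours) to guarantee a surviving edge into $G_2-S_2$. Your sketch names Lemma~\ref{l7} but attaches it to the wrong extremal subcase and supplies none of this analysis. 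I would recommend abandoning the induction and adopting the direct construction from $l_1\cup l_1'$ plus the near-perfect matching, which is what makes both connectivity claims tractable.
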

\begin{proof}
Write $AQ_n$ as $AQ_n = AQ_{n-1}^0 \cup AQ_{n-1}^1 \cup E_n^h \cup E_n^c.$ By Proposition 4.1, $AQ^0_{n-1}$ contains a ladder-like spanning subgraph $l_1$ such that if $H_0 = l_1 - \{ u_1 v_1, u_4 v_4 \},$ then $AQ_n - E(H_0)$ is spanning, $(2n - 6)$-regular, $(2n - 6)$-connected and contains a spanning ladder-like subgraph $l_2$ with a special 4-cycle, which avoids the edges $u_1 v_1$ and $u_4 v_4.$ Let $l'_1$ be the spanning ladder-like subgraph in $AQ^1_{n-1}$ corresponding to $l_1.$ Let $H_1 = l'_1 - \{ u'_1 v'_1, u'_4 v'_4 \}.$ Then $AQ^1_{n-1} - E(H_1)$ is a spanning, $(2n - 6)$-regular and $(2n - 6)$-connected subgraph of $AQ^1_{n-1}$. Let $l'_2$ be the spanning ladder-like subgraph of $AQ^1_{n-1} - E(H_1)$ corresponding to $l_2.$ Then $l'_2$ does not contain $u'_1 v'_1$ and $u'_4 v'_4.$

Let $H$ be a spanning subgraph of $AQ_n$ constructed from $l_1$ and $l'_1$ as follows.
Let $F_1 =\{u_2 u_2', u_3 u_3'\} \cup \{u_iu_i' \colon 5 \leq i \leq 2^{n-1} \}$ and let $F_2 =\{v_2 v_2', v_3 v_3'\} \cup \{v_iv_i' \colon 5 \leq i \leq 2^{n-1} \}.$ Define
$$H = l_1 \cup l'_1 \cup F_1 \cup F_2~~ \textrm{ (see Figure 7).}$$ 

Note that the edges $u_1 u'_1, u_4 u'_4, v_1 v'_1$ and $ v_4 v'_4$ do not belong to $H.$ 

\begin{center}
	\begin{tikzpicture}[scale=0.9]

\draw [fill=black] (2,3) circle  (.08) node [left]  at (2,3) {$u_{2^{n-2}}$}; 
\draw [fill=black] (2,4) circle  (.08) node [right]  at (1.9,3.8) {$u_6$}; 
\draw [fill=black] (2,5) circle  (.08) node [right]  at (2,4.8) {$u_5$}; 
\draw [fill=black] (2,6) circle  (.08) node [right]  at (2,5.8) {$u_4$}; 
\draw [fill=black] (2,7) circle  (.08) node [right]  at (2,6.8) {$u_3$}; 
\draw [fill=black] (2,8) circle  (.08) node [right]  at (2,7.8) {$u_2$}; 
\draw [fill=black] (2,9) circle  (.08) node [left]  at (2,9) {$u_1$};

\draw [fill=black] (5,3) circle  (.08)  node [right]  at (5.1,2.9) {$v_{2^{n-2}}$};
\draw [fill=black] (5,4) circle  (.08)  node [left]  at (5,3.8) {$v_6$};
\draw [fill=black] (5,5) circle  (.08)  node [left]  at (5,4.8) {$v_5$};
\draw [fill=black] (5,6) circle  (.08)  node [left]  at (5,5.8) {$v_4$};
\draw [fill=black] (5,7) circle  (.08)  node [left]  at (5,6.8) {$v_3$};
\draw [fill=black] (5,8) circle  (.08)  node [left]  at (5,7.8) {$v_2$};
\draw [fill=black] (5,9) circle  (.08)  node [right]  at (5,9) {$v_1$};

\draw [style= dotted] (2,3)--(2,4);
\draw [style= dotted] (5,3)--(5,4);
\draw [style= thin ] (2,4)--(2,5);
\draw [style= thin ] (5,4)--(5,5);
\draw [style= thin ] (2,5)--(2,9);
\draw [style= thin] (5,5)--(5,9);
\draw [style= thin] (2,3)--(5,3);
\draw [style= thin] (2,4)--(5,4);
\draw [style= thin] (2,5)--(5,5);
\draw [style= thin] (2,6)--(5,6);
\draw [style= thin] (2,7)--(5,7);
\draw [style= thin] (2,8)--(5,8);
\draw [style= thin] (2,9)--(5,9);

\draw [style= thin] (2,3)..controls(1, 6)..(2,9);
\draw [style= thin] (5,3)..controls(6, 6)..(5,9);

\draw [style= thin] (2,9)--(5,6);
\draw [style= thin] (2,6)--(5,9);


\draw [fill=black] (8,3) circle  (.08) node [left]  at (8.05,2.9) {$u'_{2^{n-2}}$}; 
\draw [fill=black] (8,4) circle  (.08) node [right]  at (7.85,3.8) {$u'_6$}; 
\draw [fill=black] (8,5) circle  (.08) node [right]  at (8,4.8) {$u'_5$}; 
\draw [fill=black] (8,6) circle  (.08) node [right]  at (8,5.8) {$u'_4$}; 
\draw [fill=black] (8,7) circle  (.08) node [right]  at (8,6.8) {$u'_3$}; 
\draw [fill=black] (8,8) circle  (.08) node [right]  at (8,7.8) {$u'_2$}; 
\draw [fill=black] (8,9) circle  (.08) node [left]  at (8,9) {$u'_1$};

\draw [fill=black] (11,3) circle  (.08)  node [right]  at (11,3) {$v'_{2^{n-2}}$};
\draw [fill=black] (11,4) circle  (.08)  node [left]  at (11,3.8) {$v'_6$};
\draw [fill=black] (11,5) circle  (.08)  node [left]  at (11,4.8) {$v'_5$};
\draw [fill=black] (11,6) circle  (.08)  node [left]  at (11,5.8) {$v'_4$};
\draw [fill=black] (11,7) circle  (.08)  node [left]  at (11,6.8) {$v'_3$};
\draw [fill=black] (11,8) circle  (.08)  node [left]  at (11,7.8) {$v'_2$};
\draw [fill=black] (11,9) circle  (.08)  node [right]  at (11,9) {$v'_1$};

\draw [style= dotted ] (8,3)--(8,4);
\draw [style= dotted] (11,3)--(11,4);
\draw [style= thin ] (8,4)--(8,5);
\draw [style= thin] (11,4)--(11,5);
\draw [style= thin ] (8,5)--(8,9);
\draw [style= thin] (11,5)--(11,9);
\draw [style= thin] (8,3)--(11,3);
\draw [style= thin] (8,4)--(11,4);
\draw [style= thin] (8,5)--(11,5);
\draw [style= thin] (8,6)--(11,6);
\draw [style= thin] (8,7)--(11,7);
\draw [style= thin] (8,8)--(11,8);
\draw [style= thin] (8,9)--(11,9);

\draw [style= thin] (8,3)..controls(7, 6)..(8,9);
\draw [style= thin] (11,3)..controls(12, 6)..(11,9);

\draw [style= thin] (8,9)--(11,6);
\draw [style= thin] (8,6)--(11,9);

\draw [style= thin] (2,3)..controls(5, 3.5)..(8,3);
\draw [style= thin] (2,4)..controls(5, 4.5)..(8,4);
\draw [style= thin] (2,5)..controls(5, 5.5)..(8,5);
\draw [style= thin] (2,7)..controls(5, 7.5)..(8,7);
\draw [style= thin] (2,8)..controls(5, 8.5)..(8,8);
\draw [style= thin] (5,3)..controls(8, 3.5)..(11,3);
\draw [style= thin] (5,4)..controls(8, 4.5)..(11,4);
\draw [style= thin] (5,5)..controls(8, 5.5)..(11,5);
\draw [style= thin] (5,7)..controls(8, 7.5)..(11,7);
\draw [style= thin] (5,8)..controls(8, 8.5)..(11,8);

\end{tikzpicture}

Figure 7: 4-regular, 4-connected subgraph $H$ of $AQ_n$\\
\end{center}

From Figure 6, it is clear that $H$ is a spanning, 4-regular subgraph of $AQ_n.$ By Lemma \ref{l3}(i), we get a spanning ladder-like subgraph $L$ in $H$ using $l_1$ and $l_1'.$
 By Corollary 2.1, $l_1$ and $l_1'$ are 3-connected. Now we prove that $H$ is $4$-connected. Let $S \subset V(H)$ with $|S| = 3.$ Let $S = S_1 \cup S_2,$ where $S_1 \subset V(l_1)$ and $S_2 \subset V(l'_1).$ We may assume that $|S_1| \geq |S_2|.$ Suppose $|S_1| < 3$ and $|S_2| < 3.$ As $l_1$ and $l'_1$ are 3-connected, both $l_1 - S_1$ and $l'_1 - S_2$ are connected. Since there are $2^n - 4$ edges from $E_n^h$ in between $l_1$ and $l'_1,\, H - S$ is connected. Suppose $|S_1| = 3.$ Then $S_2 = \emptyset.$ Every vertex of $l_1 \setminus \{ u_1, u_4, v_1, v_4\}$ is connected to $l'_1$ by an edge from $E_n^h$. Observe that each of $\{ u_1, u_4, v_1, v_4\}$ have four neighbours in $l_1$ and hence each of $\{ u_1, u_4, v_1, v_4\}$ has at least one neighbour in $l_1 - S_1.$ Through this neighbour they are connected with $l'_1.$ Therefore $H - S$ is connected. Thus, $H$ is $4$-connected.

Observe that $AQ_n - E(H)$ is spanning and $(2n - 5)$-regular. Also, by Lemma \ref{l3}(ii), we get a spanning ladder-like subgraph with a special 4-cycle in $AQ_n - E(H)$ from $l_2$ and $l'_2.$

It remains to prove that $AQ_n - E(H)$ is $(2n - 5)$-connected. Let $G = AQ_n - E(H),\, G_1 = AQ^0_{n-1} - l_1$ and $G_2 = AQ^1_{n-1} - l'_1.$ Then, we have $ G = G_1 \cup G_2 \cup \{u_1 u_1',\, v_1 v_1',\, u_4 u_4',\, v_4 v_4' \} \cup E^c_n.$ Since $(AQ^0_{n-1} -E( l_1)) \cup \{u_1 v_1, u_4 v_4\}$ and $(AQ^1_{n-1} - E(l'_1)) \cup \{u'_1 v'_1, u'_4 v'_4\}$ are $(2n - 6)$-connected, the graphs $G_1$ and $G_2$ are $(2n - 8)$-connected. Let $S \subset V(G)$ with $|S| = 2n - 6.$  It is sufficient to prove that $ G - S$ is connected.

 Let $ U = \{u_1, v_1, u_4, v_4\}$ and let $x \in V(G_1).$ If $x \in U,$ then the degree of $x$ in $G_1$ is $2n - 7$ and it has two neighbours in $G_2.$ If $x \notin U,$ then the degree of $x$ in $G_1$ is $2n - 6$ and it has only one neighbour in $G_2.$ If $ S \subset V(G_1)$ or $ S \subset V(G_2),$ then $ G - S$ is obviously connected. 

Suppose $S = S_1 \cup S_2,$ where $S_1 \subset V(G_1)$ and $S_2 \subset V(G_2)$ with $2n-7\geq |S_1| \geq |S_2| \geq 1.$  Suppose  $|S_1|< 2n-8.$  Then $|S_1|< 2n-8.$  Therefore $G_1 - S_1$ and $ G_2 - S_2$ are connected and joined to each other by an edge of $G.$ Thus $ G - S$ is connected.  

Suppose $ |S_1| = 2n-8$ or $ 2n - 7.$ Then $|S_2| \leq 2.$  Then $ G_2 - S_2$ is connected since $G_2$ contains a ladder-like subgraph $l_2'$ which is 3-connected.  Let $D$ be a component of  $ G_1 - S_1.$ 
We prove that $D$ has a neighbour in the connected graph $G_2 - S_2.$  The minimum degree of $D$ is at least one. If $D$ has more than two vertices, then $D$ has at least three neighbours in $G_2$ and so it has a neighbour in $G_2 - S_2.$ Suppose $D$ has only two vertices, say $u$ and $v.$ Clearly, $D$ is an edge $uv.$ Then $D$ has at least two neihgbours in $G_2$ and so has a neighbour in $G_2 - S_2$ if $|S_2| = 1.$ Therefore we may assume that $|S_1| = 2n - 8$ and $|S_2| = 2.$ This implies that both $u$ and $v$ belong to $U$ and further, every vertex in $S_1$ is a common neighbour of both.  If $n = 5,$ then from Figure 5(b), we have existence of $H$ in $AQ_5$ such that any two adjacent vertices in $U,$ together have four neighbours in $G_2$ and so $D$ has a neighbour in $G_2 - S_2$ in $AQ_5.$ Suppose $n \geq 6.$ Then $u$ and $v$ have at least $|S_1| = 2n - 8 \geq 4$ common neighbours in $G_1.$ By Lemma \ref{l7}, $u$ and $v$ can not have more than four common neighbours in graph $AQ_n.$ Hence the neighbours of $u$ and $v$ in $G_2$ are all distinct and therefore, $D$ has at least one neighbour in $G_2 - S_2.$ Thus $G - S$ is connected.

Therefore $G$ is $(2n - 5)$-connected. This completes the proof.
\end{proof}	

 The case $n_1 = 4$ of Theorem \ref{2} follows from the above proposition as the graph $H$ and its complement are 4-pancyclic by Lemma \ref{l2}.

\section {General Case}

\begin{proposition}
Let $n \geq 4$ and $2n - 1 = n_1 + n_2$ with $n_1, n_2 \geq 2.$ Then the augmented cube $AQ_n$ can be decomposed into two spanning subgraphs $H$ and $K$ such that $H$ is $n_1$-regular and $n_1$-connected and $K$ is $n_2$-regular and $n_2$-connected. Further, $H$ contains a spanning ladder-like subgraph and $K$ contains a spanning ladder-like subgraph with a special 4-cycle if $n_1, n_2 \geq 4.$
\end{proposition}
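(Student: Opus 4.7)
The plan is to proceed by induction on $n \geq 4$, using the three special cases $n_1 \in \{2,3,4\}$ proved in Sections~3--5 (and their symmetric versions with $n_2 \in \{2,3,4\}$) as established. These cases dispose of every partition with $\min(n_1,n_2) \leq 4$; in particular they handle the entire base case $n = 4$, where $(n_1,n_2)\in\{(2,5),(3,4),(4,3),(5,2)\}$. It therefore suffices to treat, by induction, partitions with $n_1, n_2 \geq 5$, which forces $n \geq 6$.

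For the inductive step I would write $AQ_n = AQ_{n-1}^0 \cup AQ_{n-1}^1 \cup E_n^h \cup E_n^c$ and split $2(n-1)-1$ as $(n_1-1)+(n_2-1)$. Since $n_1-1, n_2-1 \geq 4$, the inductive hypothesis applied to $AQ_{n-1}$ gives a decomposition into an $(n_1-1)$-regular, $(n_1-1)$-connected subgraph $h$ containing a spanning ladder-like subgraph, and an $(n_2-1)$-regular, $(n_2-1)$-connected subgraph $k$ containing a spanning ladder-like subgraph with a special $4$-cycle. Transporting this decomposition through the canonical isomorphisms $AQ_{n-1}\cong AQ_{n-1}^0$ and $AQ_{n-1}\cong AQ_{n-1}^1$ produces $h_0, k_0$ inside $AQ_{n-1}^0$ and corresponding $h_1, k_1$ inside $AQ_{n-1}^1$, and I would then set
\[
H = h_0 \cup h_1 \cup E_n^h \qquad\text{and}\qquad K = k_0 \cup k_1 \cup E_n^c.
\]

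These clearly partition $E(AQ_n)$, and every vertex has degree $(n_1-1)+1 = n_1$ in $H$ and $(n_2-1)+1 = n_2$ in $K$. Connectivity follows from Lemma~\ref{l4}: $H$ is the union of two vertex-disjoint $(n_1-1)$-connected graphs joined by the perfect matching $E_n^h$, hence $n_1$-connected, and symmetrically $K$ is $n_2$-connected. For the moreover-part (applicable because $n_1, n_2 \geq 5 \geq 4$), Lemma~\ref{l3}(i) applied to the ladder-like subgraphs of $h_0$ and $h_1$ yields a spanning ladder-like subgraph of $H$ using four edges of $E_n^h \subseteq E(H)$, while Lemma~\ref{l3}(ii) applied to the ladder-like subgraphs with special $4$-cycles in $k_0$ and $k_1$ yields a spanning ladder-like subgraph of $K$ with a special $4$-cycle, using four edges of $E_n^c \subseteq E(K)$.

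The main obstacle I expect is the matching bookkeeping behind the final step: one must verify that the ``corresponding'' ladder-like subgraphs in $AQ_{n-1}^0$ and $AQ_{n-1}^1$ really line up so that Lemma~\ref{l3} applies, and in particular that the special $4$-cycle in $k_1$ is the complementary image (not the hypercube image) of the one in $k_0$, ensuring that the four extra edges $F_2$ required by Lemma~\ref{l3}(ii) lie in $E_n^c$ rather than $E_n^h$. Because $H$ swallows all of $E_n^h$ and $K$ swallows all of $E_n^c$, once the canonical isomorphism is fixed this compatibility is automatic, and the remaining verifications---regularity, degree counting, and the application of Lemma~\ref{l4}---are immediate from the construction.
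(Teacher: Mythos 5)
Your proposal is correct and follows essentially the same route as the paper: induction on $n$ with the cases $\min(n_1,n_2)\le 4$ discharged by the special-case propositions, the decomposition $H = H_0\cup H_1\cup E_n^h$, $K = K_0\cup K_1\cup E_n^c$ in the inductive step, Lemma \ref{l4} for connectivity, and Lemma \ref{l3}(i)--(ii) for the spanning ladder-like subgraphs. The compatibility issue you flag about the special $4$-cycle and $E_n^c$ is exactly what Lemma \ref{l3}(ii) is set up to handle, so nothing further is needed.
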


\begin{proof}	
We prove the result by induction on $n.$ We may assume that $n_1 \leq n_2.$ Suppose $n = 4.$ Then $n_1 = 2$ or 3 the result holds. By Propositions 3.1, 4.1 and 4.2 the result is true for $n_1 = 2, 3, 4.$  Hence the result holds for $n = 4$ and $n = 5.$ Suppose $n \geq 6$ and $n_1 \geq 5.$

By induction hypothesis, $AQ_{n-1}^0$ can be decomposed into two spanning subgraphs $H_0$ and $K_0$ such that $H_0$ is $(n_1 - 1)$-regular, $(n_1 - 1)$-connected and $K_0$ is $(n_2 - 1)$-regular, $(n_2 - 1)$-connected. Further, $H$ contains a spanning ladder-like subgraph, say $l_1$  and $K$ contains a spanning ladder-like subgraph with a special 4-cycle, say $l_2$. Let $H_1$ and $K_1$ be the corresponding spanning subgraphs of $AQ_{n-1}^1.$ Let $l'_1$ and $l'_2$ be the corresponding spanning ladder-like subgraphs of $H_1$ and $K_1,$ respectively. 

We can write $AQ_n = AQ_{n-1}^0 \cup AQ_{n-1}^1 \cup E_n^h \cup E_n^c.$

Define $H = H_0 \cup H_1 \cup E_n^h$ and $K = K_0 \cup K_1 \cup E_n^c.$

Clearly, $H$ is $n_1$-regular and $K$ is $n_2$-regular and further, both are spanning subgraphs of $AQ_n.$ By Lemma \ref{l4}, $H$ is $n_1$-connected and $K$ is $n_2$-connected. Now, by Lemma \ref{l3}(i), we get a spanning ladder-like subgraph $L_1$ in $H$ from $l_1,\; l'_1$ and using four edges of $E_n^h.$ Similarly, by Lemma \ref{l3}(ii), we get a spanning ladder-like subgraph $L_2$ in $K$ with a special 4-cycle, from $l_2,\; l'_2$ and using four edges of $E_n^c.$
\end{proof}	

\textbf{ Proof of Main Theorem 1.2.}
\begin{proof} 
We may assume that $n_1 \leq n_2.$ The result holds for the case $n_1 = 2$ by Proposition 3.2 and Lemma 2.3. Suppose $n_1 = 3.$ Then, by Proposition 4.2, $AQ_n$ has spanning ladder-like subgraph $L$ such that $ H = L -\{~u_1v_1~,~ u_4v_4~\}$ is 3-regular and 3-connected and its complement $ AQ_n - E(H) $ is $(2n-4)$-regular and $(2n - 4)$-connected containing a spanning ladder-like subgraph with a special 4-cycle. Thus $H$ and $ AQ_n - E(H) $ are spanning subgraphs of $AQ_n$ and, by Lemma 2.3, they are 4-pancyclic. Suppose $n_1 \geq 4.$ Then $n_2 \geq 4.$ Now the result follows from Proposition 6.1 and Lemma 2.3. 
\end{proof}

\vspace{0.5cm}

\noindent
{\bf Concluding Remarks. } The main theorem of the paper guarantees the existence of a decomposition of $AQ_n$ into two spanning, regular, connected and pancyclic  subgraphs, whose degrees correspond to the parts of the given 2-partition of the degree $2n-1$ of $AQ_n.$
This result can be generalized to the decomposition of $AQ_n$ into $k$ subgraphs according to the given $k$-partition of $n.$ In particular, the problem of decomposing $AQ_n$ into Hamiltonian cycles and a perfect matching is still open.

 We also note that the main theorem of the paper provides a partial solution to the following question due to Mader [\cite{md}, pp.73].

\vspace{0.5cm}
\noindent
{\bf Question}(\cite{md}). {\it Given any $n$-connected graph and $k \in 	\{1, 2,..., n\}$ is there always a $k$-connected subgraph $H$ of $G$ so that $ G - E(H)$ is $( n - k)$-connected? }

\end{document}